\theoremstyle{plain}
\newtheorem{theorem}{Theorem}[section]
\newtheorem{proposition}[theorem]{Proposition}
\newtheorem{lemma}[theorem]{Lemma}
\newtheorem{notation}[theorem]{Notation}
\newtheorem{remark}{\textnormal{\textbf{Remark}}}
\theoremstyle{remark}
\def\im{\mathop{\mathrm{Im}}\nolimits}
\def\dom{\mathop{\mathrm{Dom}}\nolimits}
\def\rank{\mathop{\mathrm{rank}}\nolimits}
\def\id{\mathop{\mathrm{id}}\nolimits}
\numberwithin{equation}{section}
\def\N{\mathbb N}
\def\F{{\cal F}}
\def\C{{\cal C}}
\def\G{{\cal G}}
\title{The rank of the inverse semigroup of all partial automorphisms on a finite crown}
\author{Ilinka Dimitrova \\
\textit{Faculty of Mathematics and Natural Science}\\
\textit{South-West University "Neofit Rilski"}\\
\textit{2700 Blagoevgrad, Bulgaria}\\
\textit{email: ilinka\_dimitrova@swu.bg}\\
~~\\
J\"{o}rg Koppitz \\
\textit{Institute of Mathematics and Informatics}\\
\textit{Bulgarian Academy of Sciences}\\
\textit{1113 Sofia, Bulgaria}\\
\textit{e-mail: koppitz@math.bas.bg}}
\begin{document}

\maketitle

\begin{abstract}
For $n \in \N$, let $[n] = \{1, 2, \ldots, n\}$ be an $n$ - element set.
As usual, we denote by $I_n$ the symmetric inverse semigroup on $[n]$, i.e. the partial one-to-one transformation
semigroup on $[n]$ under composition of mappings. The crown (cycle) $\C_n$ is an $n$-ordered set with the partial order $\prec$ on $[n]$, where the only comparabilities are
$$1 \prec 2 \succ 3 \prec 4 \succ \cdots \prec n \succ 1 ~~\mbox{ or }~~ 1 \succ 2 \prec 3 \succ 4 \prec \cdots \succ n \prec 1.$$
We say that a transformation $\alpha \in I_n$ is order-preserving if $x \prec y$ implies that $x\alpha \prec y\alpha$, for all $x, y $ from the domain of $\alpha$.
In this paper, we study the inverse semigroup $IC_n$ of all partial automorphisms on a finite crown $\C_n$. We consider the elements, determine a generating set of minimal size and calculate the rank of $IC_n$.
\end{abstract}

\textit{Keywords:} finite transformation semigroup, inverse semigroup, crown, order-preserving injections,
generators, rank \\

2020 Mathematics Subject Classification: 20M20, 20M18
\\

\section{Introduction and Preliminaries}

~~\\
For $n \in \N$, let $[n] = \{1, 2, \ldots, n\}$ be an $n$ - element set.
As usual, we denote by $I_n$ the symmetric inverse semigroup on $[n]$, i.e. the partial one-to-one transformation
semigroup on $[n]$ under composition of mappings. The importance of $I_{n}$ to inverse semigroup theory may
be likened to that of the symmetric group $S_{n}$ to group theory. Every finite
inverse semigroup $S$ is embeddable in $I_{n}$, the analogue of Cayley's theorem
for finite groups, and to the regular representation of finite semigroups.
Thus, just as the study of symmetric, alternating and dihedral groups has
made a significant contribution to group theory, so has the study of various
subsemigroups of $I_{n}$, see for example \cite{DFKQ,DK1,Fern,GM,Umar}.

As usual, we denote by $<$ and $\leq$ the linear order on $\N$.
Let $n \in 2\N$. Consider the following two families of orders:

The fence (zig-zag) $\F_n$ is an $n$-ordered set with the partial order $\prec$ on $[n]$, where the only comparabilities are
$$1 \prec 2 \succ 3 \prec 4 \succ \cdots \prec n ~~\mbox{ or }~~ 1 \succ 2 \prec 3 \succ 4 \prec \cdots \succ n.$$
~~
The crown (cycle) $\C_n$ is an $n$-ordered set with the partial order $\prec$ on $[n]$, where the only comparabilities are
$$1 \prec 2 \succ 3 \prec 4 \succ \cdots \prec n \succ 1 ~~\mbox{ or }~~ 1 \succ 2 \prec 3 \succ 4 \prec \cdots \succ n \prec 1.$$

Every element of $\F_n$ and $\C_n$ is either maximal or minimal. Further, without any loss of generality, we assume $1$ to be a minimal element.
Clearly, the minimal elements are odd and maximal elements are even.\\

A fence $\F_n$ and a crown $\C_n$ are $n$-ordered sets with Hasse diagram isomorphic to the following:
\begin{center}
\includegraphics[scale=.35]{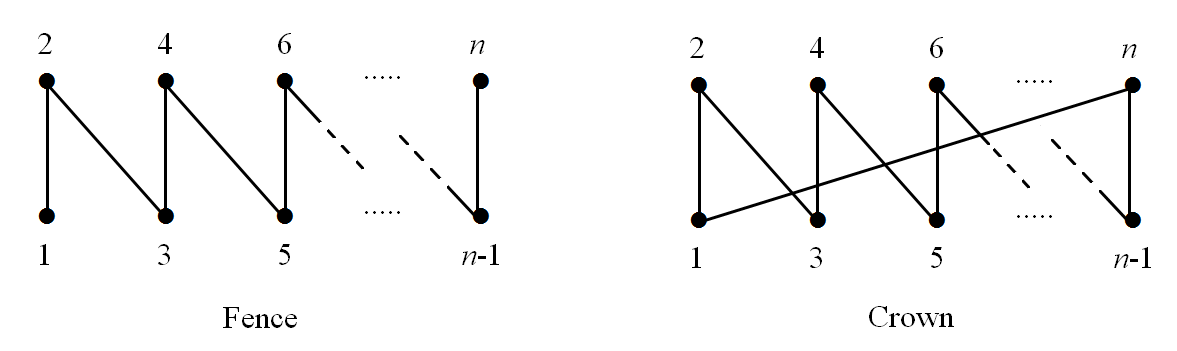}
\end{center}

For standard concepts in semigroup and symmetric inverse semigroup
theory, see for example \cite{How,Limp}. We denote by $\dom\alpha$ and $\im\alpha$ the domain and the image (range) of $\alpha \in I_{n}$, respectively. The natural number $\rank\alpha=\left\vert \im\alpha \right\vert$ is called the rank of $\alpha$. The inverse element of $\alpha$ is denoted by $\alpha^{-1}$. For a subset $X \subseteq [n]$, we denote by $\id|_X$ the identity mapping on $X$. Clearly, if $X = [n]$ then $\id|_{[n]} = \id$ is the identity mapping on $[n]$. For a subset $A\subseteq I_{n}$, we denote by $\left\langle A\right\rangle$ the subsemigroup of $I_{n}$ generated by $A$.

We say that a transformation $\alpha \in I_n$ is \textit{order-preserving} if $x \prec y$ implies that $x\alpha \prec y\alpha$, for all $x, y \in \dom \alpha$.
Order-preserving transformations of (finite) fences and crowns were first investigated by Currie and Visentin \cite{CV} in 1991. They calculated the number of order-preserving transformations of fences and crowns with an even number of elements, by using generating functions.
On the other hand, an exact formula for the order-preserving self-mappings of a fence, for any natural number $n$, was given by Rutkowski \cite{Rutk2} in 1992. Moreover, he calculated the number of strictly increasing mappings of fences in \cite{Rutk1}.
Later in 1995, Farley \cite{Farl} computed the number of order-preserving maps between fences and crowns.
Recently, several properties of monoids of order-preserving transformations of a fence were studied. In 2016, Chinram, Srithus and Tanyawong \cite{TSC} discussed the regular elements in these monoids. So-called coregular elements of these monoids were determined by Jendana and Srithus \cite{JS} in 2015.
In 2017, Dimitrova, Koppitz and Lohapan \cite{DKL} determined the relative rank of semigroups of partial transformations preserving a zig-zag order on $\mathbb{N}$ modulo a set containing all idempotents, all surjective transformations, all transformations with defect $\{1\}$ and a particular set of transformations with both infinite rank and infinite defect. In the same year, Lohapan and Koppitz \cite{KL} determined particular maximal regular subsemigroups of the semigroup of all partial transformations preserving a fence on $\mathbb{N}$ and showed that this semigroup has infinitely many maximal regular subsemigroups.
Fernandes, Koppitz and Musunthia \cite{FKM} calculated the rank of the semigroup of all order-preserving transformations on a finite fence in 2019.

We denote by $PFI_n$ (respectively, $PCI_n$) the subsemigroup of $I_n$ of all partial order-preserving injections of $\F_n$ (respectively, $\C_n$).
Note that the semigroups $PFI_n$ and $PCI_n$ are not inverse. For example
$$\alpha = \left(
             \begin{array}{cccc}
               1 & 2 & 5 & 6 \\
               3 & 2 & 5 & 4 \\
             \end{array}
           \right) \in PFI_6 ~\mbox{ but }~
\alpha^{-1} = \left(
             \begin{array}{cccc}
               2 & 3 & 4 & 5 \\
               2 & 1 & 6 & 5 \\
             \end{array}
           \right) \notin PFI_6
$$
and
$$\alpha = \left(
             \begin{array}{cccc}
               1 & 3 & 4 & 6  \\
               1 & 3 & 4 & 2 \\
             \end{array}
           \right) \in PCI_6 ~\mbox{ but }~
\alpha^{-1} = \left(
             \begin{array}{cccc}
               1 & 2 & 3 & 4 \\
               1 & 6 & 3 & 4 \\
             \end{array}
           \right) \notin PCI_6.$$

Let $IF_n$ (respectively, $IC_n$) be the set of all $\alpha \in PFI_n$ (respectively, $\alpha \in PCI_n$) such that $\alpha^{-1}\in PFI_n$
(respectively, $\alpha^{-1}\in PCI_n$).
Clearly, $IF_n$ (respectively, $IC_n$) is the set of all $\alpha \in PFI_n$ (respectively, $\alpha \in PCI_n$)
with $x \prec y$ if and only if $x\alpha \prec y\alpha$, for all $x, y \in \dom \alpha$.
Hence, $IF_n$ (respectively, $IC_n$) is the inverse semigroup of all partial automorphisms on a finite fence (respectively, crown).

In 2017, Dimitrova and Koppitz \cite{DK2} proved that the semigroup $IF_n$ is generated by its elements with rank $\geq n-2$. Moreover, they found the least generating set and calculated the rank of $IF_n$ for an even $n$. In 2021, Koppitz and Musunthia \cite{KM} solved the same problem for an odd $n$ and calculate the rank of $IF_n$.

In this paper, we will study the semigroup $IC_n$. We consider the elements, determine a generating set of minimal size and calculate the rank of $IC_n$.

\section{Generators and Rank}

In this section, we determine a generating set of minimal size and calculate the rank of $IC_n$.
First, we consider the elements in $IC_n$.
\begin{proposition}\label{pr1}
\rm Let $\alpha \in I_n$. Then $\alpha \in IC_n$ if and only if the following statements hold:

(1) if $x+1 \in \dom\alpha$ then $|x\alpha - (x+1)\alpha| = 1$ or $\{x\alpha, (x+1)\alpha\} = \{1,n\}$, for all $x \in \dom \alpha$;

(2) if $y+1 \in \im\alpha$ then $|y\alpha^{-1} - (y+1)\alpha^{-1}| = 1$ or $\{y\alpha^{-1}, (y+1)\alpha^{-1}\} = \{1,n\}$, for all $y \in \im \alpha$;

(3) if $x \in \dom\alpha$ such that $x$ and $x\alpha$ have different parity then $x+1 \notin \dom\alpha$.
\end{proposition}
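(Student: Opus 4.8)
The plan is to use the reformulation given just before the proposition: $\alpha \in IC_n$ precisely when $\alpha \in I_n$ and $x \prec y \iff x\alpha \prec y\alpha$ for all $x,y \in \dom\alpha$; that is, $\alpha$ restricts to an order-isomorphism of $\dom\alpha$ onto $\im\alpha$. Throughout I read the successor $x+1$ cyclically (so $n+1=1$), which is forced if the statement is to be correct: the comparabilities of $\C_n$ are exactly the pairs $\{x,x+1\}$ with $x \in [n]$ taken $\bmod\, n$, the case $x=n$ giving the ``wrap'' edge $\{n,1\}=\{1,n\}$. Since $n$ is even, $\C_n$ is bipartite with the odd (minimal) and even (maximal) elements as the two classes; hence every comparable pair consists of one odd and one even element, and on each such pair the odd element is the $\prec$-smaller one. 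I will prove the two implications separately.

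For necessity, assume $\alpha \in IC_n$, so $\alpha$ is an order-isomorphism onto its image and $\alpha^{-1} \in IC_n$. If $x,x+1 \in \dom\alpha$ then $\{x,x+1\}$ is a comparable pair, so $\{x\alpha,(x+1)\alpha\}$ is comparable, i.e.\ a crown edge, which gives (1); applying the same to $\alpha^{-1}$ gives (2). For (3), if $x,x+1 \in \dom\alpha$ then the odd endpoint of $\{x,x+1\}$ must map to the odd endpoint of the image edge (order-isomorphisms send minimal to minimal within an edge), so $x$ and $x\alpha$ have the same parity; contrapositively, different parity forces $x+1 \notin \dom\alpha$.

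For sufficiency, assume (1)--(3). The core step is a parity lemma: if $x,x+1 \in \dom\alpha$ then $\{x\alpha,(x+1)\alpha\}$ is a crown edge (by (1)), and moreover $x,x\alpha$ have equal parity while $x+1,(x+1)\alpha$ have equal parity. Indeed (3) rules out unequal parity at $x$ once $x+1 \in \dom\alpha$; and since $x,x+1$ have opposite parity and the image edge also joins an odd to an even element, the parity at $x+1$ is then forced as well. With this in hand the forward inclusion is immediate: if $x \prec y$ with $x,y \in \dom\alpha$ then $\{x,y\}=\{z,z+1\}$ for some $z$, so $\{x\alpha,y\alpha\}$ is an edge, and since parity is preserved at both endpoints, the odd element $x$ maps to an odd element and the even element $y$ to an even element, whence $x\alpha \prec y\alpha$. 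For the reverse inclusion I will not need a parity statement for $\alpha^{-1}$: if $x\alpha \prec y\alpha$ then $\{x\alpha,y\alpha\}$ is an image edge $\{w,w+1\}$, so by (2) the pair $\{x,y\}=\{w\alpha^{-1},(w+1)\alpha^{-1}\}$ is a crown edge; thus $x,y$ are comparable, and $y\prec x$ is impossible because it would give $y\alpha\prec x\alpha$ by the forward inclusion, contradicting antisymmetry of $\prec$. Hence $x\prec y$, and the biconditional $x\prec y \iff x\alpha\prec y\alpha$ holds, so $\alpha\in IC_n$. The main obstacle is not comparability (which (1) and (2) handle directly) but getting the \emph{direction} of each comparability right; this is exactly the work done by condition (3) through the parity lemma, and the essential use of $n$ being even is that it makes the whole cycle, including the wrap edge $\{1,n\}$, consistently two-colored by parity.
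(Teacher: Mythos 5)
Your proof is correct, and its first two parts (necessity, and the forward half of sufficiency) match the paper's argument in substance: the paper also derives (3) from the fact that within each comparable pair the minimal element is the odd one, and also proves that $x,x+1\in\dom\alpha$ with $x\prec x+1$ forces $x\alpha\prec(x+1)\alpha$ by combining (1) with the contrapositive of (3); your ``parity lemma'' is just a direct, rather than by-contradiction, packaging of that same step. Where you genuinely depart from the paper is the reverse half of sufficiency. The paper proves $y\prec y+1 \Rightarrow y\alpha^{-1}\prec(y+1)\alpha^{-1}$ by a second parity chase on preimages: assuming the wrong orientation, (3) applied to $y\alpha^{-1}$ forces $(y+1)\alpha^{-1}=y\alpha^{-1}-1$, and a second application of (3), now to $(y+1)\alpha^{-1}$, contradicts $y\alpha^{-1}\in\dom\alpha$. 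You instead use (2) only to conclude that $\{x,y\}$ is comparable, and then rule out the wrong orientation by citing the already-proved forward implication together with antisymmetry of $\prec$. This is shorter and avoids re-running any parity analysis on the inverse side; the paper's version is self-contained on the $\alpha^{-1}$ side (it never invokes the forward implication), but yours buys real economy. A further point in your favour: you state explicitly that the successor must be read cyclically ($n+1=1$), and you are right that this is forced for the proposition to be true --- for $n\geq 4$, the map $\alpha$ with $\dom\alpha=\{1,n\}$, $1\alpha=1$, $n\alpha=3$ satisfies (1)--(3) vacuously under the non-cyclic reading, yet fails to preserve the comparability $1\prec n$, so $\alpha\notin IC_n$. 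The paper leaves this convention implicit (its cyclic arithmetic is only introduced after the proposition), so your remark closes a small expository gap rather than creating one.
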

\begin{proof}
Suppose that $\alpha \in IC_n$. Let $x \in \dom \alpha$ (respectively, $y \in \im\alpha$). Without loss of generality, we can assume that $x$ is odd (respectively, $y$ is odd). Then $x \prec x+1$ (respectively, $y \prec y+1$) implies $x\alpha \prec (x+1)\alpha$ (respectively, $y\alpha^{-1} \prec (y+1)\alpha^{-1}$), i.e. $(x+1)\alpha = x\alpha \pm 1$ or $\{x\alpha, (x+1)\alpha\} = \{1,n\}$ (respectively, $(y+1)\alpha^{-1} = y\alpha^{-1} \pm 1$ or $\{y\alpha^{-1}, (y+1)\alpha^{-1}\} = \{1,n\}$). This shows (1) and (2).
Let $x \in \dom\alpha$ such that $x$ and $x\alpha$ have different parity. Assume that $x+1 \in \dom\alpha$. Without loss of generality let $x$ be odd. Then $x \prec x+1$. Since $\alpha \in IC_n$, we obtain $x\alpha \prec (x+1)\alpha$. This implies $x\alpha$ is odd, a contradiction. This shows (3).

Suppose now that (1) - (3) hold. Without loss of generality, we can assume that $x$ is odd, i.e. $x \prec x+1$. Assume that $x+1 \in \dom \alpha$ and $(x+1)\alpha \prec x\alpha$, i.e. $x\alpha$ is even. Then $x$ and $x\alpha$ have different parity and thus, (3) implies $x+1 \notin \dom \alpha$, which is a contradiction.
Now, assume that $x+1 \in \dom \alpha$ and $x\alpha \nprec (x+1)\alpha$. Then $x\alpha \pm 1 \neq (x+1)\alpha$ and
$\{x\alpha, (x+1)\alpha\} \neq \{1,n\}$. This contradicts to (1). Therefore, we have $x+1 \notin \dom\alpha$ or $x\alpha \prec (x+1)\alpha$. This shows that from $x, x+1 \in \dom\alpha$ with $x \prec x+1$, it follows $x\alpha \prec (x+1)\alpha$.

Let $y \in \im\alpha$. Suppose that $y+1 \in \im\alpha$. Without loss of generality, we can assume that $y$ is odd. Then $y \prec y+1$. From (2), it follows that $y\alpha^{-1} \pm 1 = (y+1)\alpha^{-1}$ or $\{y\alpha^{-1}, (y+1)\alpha^{-1}\} = \{1,n\}$. Thus, we have $y\alpha^{-1} \prec (y+1)\alpha^{-1}$ or $(y+1)\alpha^{-1} \prec y\alpha^{-1}$. Assume that $(y+1)\alpha^{-1} \prec y\alpha^{-1}$. Then $y\alpha^{-1} \in \dom\alpha$ and $y\alpha^{-1}\alpha=y$ have different parity and by (3), we have $y\alpha^{-1}+1 \notin \dom\alpha$. Thus, $(y+1)\alpha^{-1} = y\alpha^{-1} - 1 \in \dom\alpha$, where $(y+1)\alpha^{-1}$ and $y+1$ have different parity, i.e. $(y+1)\alpha^{-1} +1 \notin \dom\alpha$ by (3). Hence, $y\alpha^{-1} = (y\alpha^{-1} - 1) + 1 = (y+1)\alpha^{-1} + 1 \notin \dom\alpha$, a contradiction. Therefore, from $y, y+1 \in \im\alpha$ with $y \prec y+1$, it follows $y\alpha^{-1} \prec (y+1)\alpha^{-1}$.
\end{proof}

It is easy to verify that
$$\left\{\id,
\left(
\begin{array}{c}
1 \\
2%
\end{array}%
\right),
\left(
\begin{array}{c}
2 \\
1%
\end{array}%
\right)
\right\}$$
is a generating set of minimal size of
$$IC_{2}=
\left\{\id,
\left(\begin{array}{c}
1 \\
1%
\end{array}%
\right),
\left(\begin{array}{c}
1 \\
2%
\end{array}%
\right),
\left(\begin{array}{c}
2 \\
1%
\end{array}%
\right),
\left(\begin{array}{c}
2 \\
2%
\end{array}%
\right),
\emptyset \right\},$$
where $\emptyset$ is the empty transformation.
This shows that
$$\rank IC_{2} = 3.$$

Next, let $n\geq 4$ and define
$$\sigma_1 = \left(
\begin{array}{ccccccc}
1 & 2 & 3 & \cdots & n-2 & n-1 & n \\
3 & 4 & 5 & \cdots &  n  &  1  & 2
\end{array}%
\right)$$
and
$$\sigma_2 = \left(
\begin{array}{ccccccc}
1 & 2 &  3  & \cdots & n-2 & n-1 & n \\
1 & n & n-1 & \cdots &  4  &  3  & 2
\end{array}%
\right).$$

Let $i, j \in [n]$. Then we put
$$i + j = \left\{\begin{array}{ll}
            i + j, & \mbox{if } 1 \leq i + j \leq n, \\
            i+j-n, & \mbox{if } i+j > n \\
          \end{array}\right.
~~\mbox{ and }~~
i - j = \left\{\begin{array}{ll}
            i - j, & \mbox{if } 1 \leq i - j \leq n, \\
            i-j+n, & \mbox{if } i-j < 1. \\
          \end{array}\right.$$

First, we will show that $\{\sigma_1, \sigma_2\}$ is a generating set for all transformations in $IC_{n}$ with rank $n$.
\begin{lemma}\label{leG1}
Let $\alpha \in IC_n$ with $\rank \alpha = n$. Then $\alpha \in \langle \sigma_1, \sigma_2 \rangle$.
\end{lemma}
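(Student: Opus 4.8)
The plan is to show that every rank-$n$ element of $IC_n$ is an order-automorphism of the whole crown, and that these automorphisms are exactly the elements of the dihedral group of order $n$ generated by the rotation $\sigma_1$ and the reflection $\sigma_2$. First I would record two structural facts about such an $\alpha$. Since $\rank\alpha = n$, $\alpha$ is a bijection of $[n]$, so $\dom\alpha = \im\alpha = [n]$ and, by the definition of $IC_n$, $\alpha$ is an order-isomorphism of $\C_n$; in particular it permutes the minimal (odd) elements among themselves, so $1\alpha$ is odd. Moreover, Proposition \ref{pr1}(1), together with the same statement applied to $\alpha^{-1}$ via Proposition \ref{pr1}(2), says that $\alpha$ preserves adjacency in the cycle: a pair $x,y$ with $|x-y|=1$ or $\{x,y\}=\{1,n\}$ is sent to a pair of the same type.

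The key reduction is to normalise $1\alpha$ to $1$ using powers of $\sigma_1$. Since $1\alpha$ is odd, I can choose $t$ with $1\alpha + 2t = 1$ (modular arithmetic as defined above) and put $\beta = \alpha\sigma_1^{\,t}$; here I use the convention that maps act on the right, so $1\beta = (1\alpha)\sigma_1^{\,t} = 1\alpha + 2t = 1$, and $\beta$ is again a rank-$n$ element of $IC_n$. Because $\sigma_1$ has finite order $n/2$, the inverse power $\sigma_1^{-t}$ equals a positive power of $\sigma_1$ and hence lies in $\langle\sigma_1,\sigma_2\rangle$; thus it suffices to prove $\beta \in \langle \sigma_1,\sigma_2\rangle$, since then $\alpha = \beta\sigma_1^{-t} \in \langle\sigma_1,\sigma_2\rangle$ as well.

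Next I would pin down $\beta$ completely from its values at $1$ and $2$. The neighbours of $1\beta = 1$ in the cycle are $2$ and $n$, so adjacency-preservation forces $2\beta \in \{2,n\}$. I claim each choice determines $\beta$ uniquely by an induction along the cycle: knowing $k\beta$ and $(k+1)\beta$, the value $(k+2)\beta$ must be adjacent to $(k+1)\beta$ and, by injectivity, must be the neighbour of $(k+1)\beta$ different from $k\beta$; since every vertex of the cycle has exactly two neighbours, $(k+2)\beta$ is forced. Carrying out this induction gives $\beta = \id$ when $2\beta = 2$ and $\beta = \sigma_2$ when $2\beta = n$. As $\id = \sigma_1^{\,n/2} \in \langle\sigma_1,\sigma_2\rangle$, in both cases $\beta \in \langle\sigma_1,\sigma_2\rangle$, which completes the argument.

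The step I expect to be the main obstacle is the rigidity induction: making precise that adjacency-preservation together with injectivity leaves no freedom once $1\beta$ and $2\beta$ are fixed, and handling the wrap-around in the cycle (the modular $\pm$ defined before the lemma) cleanly at both ends. A secondary point to state carefully is that, although $\alpha$ a priori lies only in the adjacency-preserving group — which is dihedral of order $2n$ — the parity constraint inherent in $\alpha$ being an order-isomorphism (already visible in the fact that $1\beta = 1$ forces $2\beta$ to be even) restricts us to the order-$n$ subgroup that $\sigma_1$ and $\sigma_2$ actually generate.
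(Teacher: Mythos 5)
Your proposal is correct and takes essentially the same approach as the paper: both reduce, via Proposition \ref{pr1}, to the observation that a rank-$n$ element of $IC_n$ is a parity-preserving, adjacency-preserving permutation of the cycle, and hence must equal $\sigma_1^p$ or $\sigma_1^p\sigma_2$ for some $p$. The paper asserts this classification in a single line, while you make it explicit by normalising $1\alpha$ to $1$ with a power of $\sigma_1$ and then running the rigidity induction along the cycle; this fills in exactly the step the paper leaves implicit.
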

\begin{proof}
If $\rank \alpha = n$ then $\dom\alpha = \im\alpha = [n]$ and by Proposition \ref{pr1}, we have $|x\alpha - (x+1)\alpha| = 1$ or $\{x\alpha, (x+1)\alpha\} = \{1,n\}$, for all $x \in [n]$. Therefore, $\alpha = \sigma_1^p$ or $\alpha = \sigma_1^p\sigma_2$, for some $p \in \{1,\ldots,\frac{n}{2}\}$.
\end{proof}

Now, let $\varepsilon_i = \id|_{[n]\setminus\{i\}}$ for $i \in [n]$, i.e.
$$\varepsilon_i = \left(
\begin{array}{cccccccc}
1 & 2 & \cdots & i-1 & i & i+1 & \cdots & n \\
1 & 2 & \cdots & i-1 & - & i+1 & \cdots & n
\end{array}%
\right).$$
Clearly, $\rank \varepsilon_i = n-1$. We will show that the set $\{\sigma_1, \sigma_2, \varepsilon_1, \varepsilon_n\}$ is a generating set for all transformations in $IC_{n}$ with rank $n-1$.

\begin{lemma}\label{leG2}
Let $\alpha \in IC_n$ with $\rank \alpha = n-1$. Then $\alpha \in \langle \sigma_1, \sigma_2, \varepsilon_1, \varepsilon_n \rangle$.
\end{lemma}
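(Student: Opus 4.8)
The plan is to reduce the statement to two facts: first, that every restriction $\varepsilon_k=\id|_{[n]\setminus\{k\}}$ ($k\in[n]$) already lies in $\langle\sigma_1,\varepsilon_1,\varepsilon_n\rangle$; and second, that every $\alpha\in IC_n$ of rank $n-1$ is the restriction $\varepsilon_i\hat\alpha$ of a genuine (rank $n$) automorphism $\hat\alpha$ of $\C_n$, where $\{i\}=[n]\setminus\dom\alpha$. Granting both, since $\hat\alpha\in\langle\sigma_1,\sigma_2\rangle$ by Lemma \ref{leG1} and $\varepsilon_i\in\langle\sigma_1,\varepsilon_1,\varepsilon_n\rangle$, we obtain $\alpha=\varepsilon_i\hat\alpha\in\langle\sigma_1,\sigma_2,\varepsilon_1,\varepsilon_n\rangle$, as required.

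For the first fact I would conjugate by powers of $\sigma_1$. Since $\sigma_1$ is the rotation $x\mapsto x+2$, it has finite order $\tfrac{n}{2}$, so $\sigma_1^{-1}\in\langle\sigma_1\rangle$, and a direct computation gives $\sigma_1^{-p}\varepsilon_1\sigma_1^{p}=\varepsilon_{1+2p}$ and $\sigma_1^{-p}\varepsilon_n\sigma_1^{p}=\varepsilon_{n+2p}$, with indices read cyclically. As $p$ ranges over $\{0,\ldots,\tfrac{n}{2}-1\}$, the first family runs through all $\varepsilon_k$ with $k$ odd and the second through all $\varepsilon_k$ with $k$ even, so indeed $\varepsilon_k\in\langle\sigma_1,\varepsilon_1,\varepsilon_n\rangle$ for every $k\in[n]$.

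The heart of the argument is the extension claim. Write $\dom\alpha=[n]\setminus\{i\}$ and $\im\alpha=[n]\setminus\{j\}$. Deleting $i$ from the crown destroys exactly the two edges at $i$, so the induced poset on $\dom\alpha$ is a fence whose two endpoints (the only elements with a single comparability) are the cyclic neighbours $i-1$ and $i+1$; likewise $\im\alpha$ is a fence with endpoints $j-1,j+1$. Being an order isomorphism, $\alpha$ preserves the number of comparabilities of each point and hence maps endpoints to endpoints, so $\{(i-1)\alpha,(i+1)\alpha\}=\{j-1,j+1\}$. Because $\alpha\in IC_n$ it preserves the minimal/maximal type, i.e. parity, which forces $i$ and $j$ to have the same parity. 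I would then define $\hat\alpha$ to agree with $\alpha$ on $[n]\setminus\{i\}$ and to send $i\mapsto j$, and check that the only new comparabilities, those of $i$ with $i-1$ and $i+1$, are carried correctly onto those of $j$ with $j-1$ and $j+1$. This makes $\hat\alpha$ a bijective automorphism of $\C_n$, so $\hat\alpha\in IC_n$ with $\rank\hat\alpha=n$, and $\varepsilon_i\hat\alpha=\alpha$ since $\hat\alpha$ restricts to $\alpha$ off $i$.

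I expect the main obstacle to be the clean verification that the ``hole'' $i$ and its image hole $j$ are compatible: precisely that the two loose ends $i-1,i+1$ of the domain fence are sent to the two loose ends $j-1,j+1$ of the image fence, and that the parities of $i$ and $j$ agree. This is exactly what guarantees that the reconnected map $\hat\alpha$ is genuinely order-preserving rather than merely a bijection. Everything else, namely the conjugation computation and the factorisation $\alpha=\varepsilon_i\hat\alpha$, is routine once the cyclic ``$+$/$-$'' arithmetic is bookkept correctly.
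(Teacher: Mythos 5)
Your proposal is correct, but it establishes the key step by a genuinely different argument than the paper. The paper's proof is an explicit classification: it lists the four families of rank-$(n-1)$ elements ($\varepsilon_i$, $\gamma_i$, $\alpha_i^{(k)}$, $\gamma_i^{(k)}$, i.e.\ the restrictions of rotations and reflections of the crown) and exhibits for each a concrete word in the generators, e.g.\ $\varepsilon_i=\sigma_1^{\frac{n-i}{2}}\varepsilon_n\sigma_1^{\frac{i}{2}}$ for even $i$, $\alpha_i^{(k)}=\varepsilon_i\sigma_1^{\frac{k-i}{2}}$, $\gamma_i^{(k)}=\alpha_i^{(n-k+2)}\sigma_2$. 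Your conjugation identity $\sigma_1^{-p}\varepsilon_1\sigma_1^{p}=\varepsilon_{1+2p}$ is exactly the paper's formula for $\varepsilon_i$ (since $\sigma_1^{-p}=\sigma_1^{\frac{n}{2}-p}$), and your factorisation $\alpha=\varepsilon_i\hat\alpha$ is the common shape of all the paper's formulas; what differs is how the existence of the full automorphism $\hat\alpha$ is justified. The paper asserts the classification without proof (it follows implicitly from Proposition \ref{pr1}), whereas you prove the extension property structurally: deleting the holes turns domain and image into fences, an order isomorphism preserves comparability degrees (so endpoints go to endpoints) and minimality (so, in these fences, odd goes to odd), forcing the holes $i$ and $j$ to have the same parity and allowing the map to be reconnected to a crown automorphism, which Lemma \ref{leG1} then places in $\langle\sigma_1,\sigma_2\rangle$. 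Your route buys a case-free argument that actually proves the classification step the paper leaves to the reader; the paper's route buys explicit normal forms and notation ($\gamma_i$, $\alpha_i^{(k)}$, $\gamma_i^{(k)}$) that it reuses in later proofs such as Proposition \ref{prG1}. One caution: elements of $IC_n$ do not preserve parity in general (e.g.\ $\eta_2$), so your phrase ``$\alpha\in IC_n$ preserves the minimal/maximal type, i.e.\ parity'' is only valid here because, for rank $n-1$, the induced posets on domain and image are fences whose minimal elements are precisely the odd numbers and poset isomorphisms preserve minimality; that justification should be made explicit, but it is the argument you clearly intend.
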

\begin{proof}
If $\rank \alpha = n-1$ then $\dom\alpha = [n] \setminus \{i\}$ and $\im\alpha = [n]\setminus \{k\}$ for some $i, k \in [n]$.

Let $i=k$. Then $\alpha = \varepsilon_i$ or
$$\alpha = \gamma_i = \left(
\begin{array}{cccccccc}
1 & 2 & \cdots & i-1 & i & i+1 & \cdots & n \\
2i-1 & 2i-2 & \cdots & i+1 & - & i-1 & \cdots & 2i
\end{array}%
\right).$$

If $\alpha = \varepsilon_i$ then
$$\alpha = \left\{\begin{array}{ll}
            \varepsilon_1, & \mbox{for } i=1, \\
            \varepsilon_n, & \mbox{for } i=n, \\
            \sigma_1^{\frac{n-i}{2}}\varepsilon_n\sigma_1^{\frac{i}{2}}, & \mbox{for } i \in 2\N \mbox{ and } 2 \leq i \leq n-2, \\
            \sigma_1^{\frac{n-i+1}{2}}\varepsilon_1\sigma_1^{\frac{i-1}{2}}, & \mbox{for } i \in 2\N+1 \mbox{ and } 3 \leq i \leq n-1.\\
          \end{array}\right.$$

If $\alpha = \gamma_i$ then
$$\alpha = \varepsilon_i\sigma_2\sigma_1^{i-1}.$$

Let $i \neq k$. Then
$$\alpha = \alpha_i^{(k)} = \left(
\begin{array}{cccccccc}
1 & \cdots & i-1 & i & i+1 & \cdots & n \\
k-i+1 & \cdots & k-1 & - & k+1 & \cdots & k-i
\end{array}%
\right)$$
or
$$\alpha = \gamma_{i}^{(k)}=\left(
\begin{array}{ccccccc}
1 & \cdots  & i-1 & i & i+1 & \cdots  & n \\
k+i-1 & \cdots  & k+1 & - & k-1 & \cdots  & k+i
\end{array}%
\right).$$
Notice that $i$ and $k$ have the same parity, i.e. both are even or both are odd.

If $\alpha = \alpha_i^{(k)}$ then
$$\alpha = \left\{\begin{array}{ll}
            \varepsilon_i\sigma_1^{\frac{k-i}{2}}, & \mbox{for } 1 \leq i < k \leq n, \\
            \varepsilon_i\sigma_1^{\frac{n+k-i}{2}}, & \mbox{for } 1 \leq k < i \leq n. \\
          \end{array}\right.$$

If $\alpha = \gamma_i^{(k)}$ then
$$\alpha = \alpha_{i}^{(n-k+2)}\sigma_{2}.$$
\end{proof}

Further, we define the following transformations with rank $n-2$:
$$\gamma_{i,j} = \left(
\begin{array}{cccccccccccc}
1 & 2 & \cdots & i-1 & i & i+1 & \cdots & j-1 & j & j+1 & \cdots & n \\
1 & 2 & \cdots & i-1 & - & j-1 & \cdots & i+1 & - & j+1 & \cdots & n
\end{array}%
\right),$$
for $1 \leq i < j \leq n$ and
$$\gamma_{i,j} = \left(
\begin{array}{ccccccccccc}
1 & \cdots & j-1 & j & j+1 & \cdots & i-1 & i & i+1 & \cdots & n \\
i+j-1 & \cdots & i+1 & - & j+1 & \cdots & i-1 & - & j-1 & \cdots & i+j
\end{array}%
\right),$$
for $1 \leq j < i \leq n$, whenever $i, j$ have the same parity, i.e. both are even or both are odd.
Let
$$\mathcal{A} = \{\sigma_1, \sigma_2, \varepsilon_1, \varepsilon_n\} \cup \{\gamma_{i,n} \mid i \in 2\N \mbox{ and } 4 \leq i \leq 2\left\lfloor \frac{n}{4}\right\rfloor\} \cup \{\gamma_{i,1} \mid i \in 2\N+1 \mbox{ and } 5 \leq i \leq 2\left\lfloor \frac{n}{4}\right\rfloor+1\}.$$

Let $\alpha \in IC_{n}$. Then we put
$$\chi(\alpha )=\{x\in \dom\alpha \mid x \mbox{ and } x\alpha \mbox{ have different parity} \}.$$

Let $IC_{n}^{p}$ be the set of all $\alpha \in IC_{n}$ with $\chi(\alpha)=\emptyset$ and let $IC_{n}^{\overline{p}}$ be the set of all $\alpha \in
IC_{n}$ with $\chi(\alpha)\neq \emptyset$.\\

First, we will show that $\mathcal{A}$ is a generating set of $IC_n^p$.
To accomplish this aim we start by proving a series of lemmas.

\begin{lemma}\label{leG3}
Let $1 \leq i < j \leq n$. Then $\gamma_{i,j} \in \langle \mathcal{A} \rangle$.
\end{lemma}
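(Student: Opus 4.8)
The plan is to generate each $\gamma_{i,j}$ from a single seed in $\mathcal{A}$ using only two rank-preserving moves that stay inside $\langle \mathcal{A}\rangle$: conjugation by powers of $\sigma_1$ (which is a bijection, so it preserves rank $n-2$) and one multiplication by a full reflection of the form $\sigma_2\sigma_1^{p}$. First I would record the rotation formula. Checking the three blocks $\{x<i\}$, $\{i<x<j\}$, $\{x>j\}$ of the domain directly, one finds, with indices read cyclically, that $\sigma_1^{-p}\gamma_{i,j}\sigma_1^{p}=\gamma_{i+2p,\,j+2p}$ whenever the shifted interval does not straddle the junction between $n$ and $1$. Since $\sigma_1^{-1}=\sigma_1^{n/2-1}$, this lets me slide the reversed block $\{i+1,\dots ,j-1\}$ freely around the crown while preserving both its length $\ell=j-i-1$ (which is odd, as $i,j$ have equal parity) and the parity of the cut points $i,j$. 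Using it I reduce an even--even pair to the normal form $\gamma_{k,n}$ (take $p=(n-j)/2$) and an odd--odd pair to the form $\gamma_{1,m}$, so only these two one-parameter families remain.

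The key device is an identity exchanging the reversed arc with the fixed arc. If a partial reflection $\delta$ reverses an arc $R$ with centre $c$ via $x\mapsto 2c-x$, fixes the complementary arc $F$, and removes its two cut points, and if $\rho=\sigma_2\sigma_1^{p}$ is the rank-$n$ reflection $x\mapsto 2c-x$ through $c$, then $\delta\rho$ acts on $R$ as $x\mapsto 2c-(2c-x)=x$ and on $F$ as $\rho$; since the centre of $F$ is antipodal to $c$, the reflection $\rho$ reverses $F$ about its own centre. Hence $\delta\rho$ is exactly the complementary partial reflection: same two removed points, with the roles of fixed and reversed arc interchanged (one verifies $\rho$ swaps the two cut points, so the image equals the domain). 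Because the forward seeds $\gamma_{i,n}$ and the backward seeds $\gamma_{i,1}$ each reverse the \emph{longer} of their two arcs (reversed lengths running through the odd values in roughly $[\,n/2-1,\,n-5\,]$), this identity turns a suitably rotated seed into the missing short-arc elements, while keeping the cut-point parity unchanged.

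With these tools the argument is a case analysis on $\ell$ within each parity class. The degenerate case $\ell=1$ gives $\gamma_{i,i+2}=\id|_{[n]\setminus\{i,i+2\}}=\varepsilon_i\varepsilon_{i+2}\in\langle\mathcal{A}\rangle$, since every $\varepsilon_i$ lies in $\langle\mathcal{A}\rangle$ by Lemma \ref{leG2}. The extreme case $\ell=n-3$ (a single fixed point) is a restriction of a full reflection, e.g. $\gamma_{k,n}=\varepsilon_k\varepsilon_n\,\sigma_2\sigma_1^{p}$ for the appropriate $p$. For an intermediate normal form I match it to a seed directly when its reversed arc is the long one and the parameter ($k$, respectively $m$) falls in the seed range; otherwise I rotate the complementary long-arc element to a seed and apply the interchange identity of the previous paragraph to recover the desired short-arc $\gamma_{i,j}$.

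I expect the main obstacle to be organizational rather than conceptual: showing that the two seed families, together with their complements and the two extreme constructions, cover every admissible odd length $\ell\in\{1,3,\dots ,n-3\}$ in both parity classes with no gap. This is precisely where the cutoff $2\lfloor n/4\rfloor$ is used: the longest reversed arc from a seed and the longest fixed arc (hence the longest short arc produced by the interchange identity) must meet near $\ell\approx n/2$, and a short computation shows $4\lfloor n/4\rfloor\ge n-2$, with equality when $n\equiv 2\pmod 4$, so the two ranges abut exactly. The remaining care is bookkeeping the cyclic index arithmetic -- tracking when the shifted reversed arc wraps across the $n$--$1$ junction (turning a ``forward'' label into a ``backward'' one) and handling the cases $n\equiv 0$ and $n\equiv 2\pmod 4$ separately -- which I would verify uniformly once the identities above are in place.
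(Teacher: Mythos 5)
Your proposal is correct and follows essentially the same route as the paper's proof: rotation by powers of $\sigma_1$ to the normal forms $\gamma_{k,n}$ and $\gamma_{1,m}$, the seeds of $\mathcal{A}$ for long reversed arcs, multiplication by a full reflection from $\langle\sigma_1,\sigma_2\rangle$ to interchange the fixed and reversed arcs in the short-arc cases, and $\varepsilon$-products for the degenerate cases, with the coverage hinging on $4\lfloor n/4\rfloor\geq n-2$ exactly as you note. The paper simply records these same moves as explicit closed-form factorizations (e.g.\ $\gamma_{i,j}=\sigma_1^{\frac{n-j}{2}}\gamma_{n+i-j,n}\sigma_1^{\frac{j}{2}}$, $\gamma_{i,n}=\sigma_1^{\frac{n-i}{2}}\gamma_{n-i,n}\sigma_1\sigma_2$, $\gamma_{2,n}=\varepsilon_2\varepsilon_n\sigma_2$), so there is no substantive difference.
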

\begin{proof}
Let $i, j \in 2\N$. Then
$$\gamma_{2,n} = \varepsilon_2\varepsilon_n\sigma_2, ~~~ \gamma_{n-2,n} = \varepsilon_{n-2}\varepsilon_n, ~~~ \gamma_{i,n} \in \mathcal{A}, \mbox{ for } 4 \leq i \leq 2\left\lfloor \frac{n}{4}\right\rfloor,$$
$$\gamma_{i,n} = \sigma_1^{\frac{n-i}{2}}\gamma_{n-i,n}\sigma_1\sigma_2, \mbox{ for } 2\left\lfloor \frac{n}{4}\right\rfloor+2  \leq i \leq n-4, ~~~~
\gamma_{i,j} = \sigma_1^{\frac{n-j}{2}}\gamma_{n+i-j,n}\sigma_1^\frac{j}{2}, \mbox{ for } 2  \leq i < j \leq n-2.$$

Let $i, j \in 2\N-1$. Then
$$\gamma_{1,3} = \varepsilon_1\varepsilon_3, ~~~ \gamma_{1,n-1} = \varepsilon_1\varepsilon_{n-1}\sigma_2\sigma_1^{\frac{n}{2}-1},$$
$$\gamma_{1,j} = \sigma_2\gamma_{n-j+2,1}\sigma_2, \mbox{ for } 5 \leq j \leq n-3, ~~~~
\gamma_{i,j} = \sigma_1^{\frac{n-j+1}{2}}\gamma_{n-j+i+1,1}\sigma_1^{\frac{j-1}{2}}, \mbox{ for } 3 \leq i < j \leq n-1.$$
\end{proof}

\begin{lemma}\label{leG4}
Let $1 \leq j < i \leq n$. Then $\gamma_{i,j} \in \langle \mathcal{A} \rangle$.
\end{lemma}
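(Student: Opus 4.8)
The plan is to reduce the case $1 \leq j < i \leq n$ to the already settled case $i < j$ of Lemma~\ref{leG3} by multiplying $\gamma_{j,i}$ by a suitable reflection assembled from $\sigma_1$ and $\sigma_2$. On $[n]$ the generator $\sigma_1$ acts as the cyclic shift $x \mapsto x+2$ and $\sigma_2$ as the reflection $x \mapsto n+2-x$; hence $\sigma_2\sigma_1^{p}$ is the reflection $x \mapsto (n+2+2p)-x$, and since $\sigma_1$ has order $\frac{n}{2}$ (so $\sigma_1^{n/2}=\id$) these products realise every reflection $x \mapsto c-x$ of $\C_n$ with $c$ even. As $i$ and $j$ have the same parity, $i+j$ is even, so I would put $R=\sigma_2\sigma_1^{\frac{i+j}{2}-1}$, which is the reflection $x \mapsto (i+j)-x$ (the exponent being read modulo $\frac{n}{2}$).

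The key identity to verify is
$$\gamma_{i,j}=\sigma_2\,\sigma_1^{\frac{i+j}{2}-1}\,\gamma_{j,i}\qquad(1\le j<i\le n),$$
where $\gamma_{j,i}$ has smaller first index $j$ and larger second index $i$, and is therefore of the type treated in Lemma~\ref{leG3}, so that $\gamma_{j,i}\in\langle\mathcal{A}\rangle$. To check the identity I would observe that removing $i$ and $j$ splits $\C_n$ into the inner arc $\{j+1,\ldots,i-1\}$ and the outer arc $\{i+1,\ldots,n\}\cup\{1,\ldots,j-1\}$. The reflection $R$ reverses both arcs and interchanges $i$ and $j$; post-composing with $\gamma_{j,i}$, which reverses the inner arc, fixes the outer arc and deletes $i$ and $j$, therefore restores the inner arc, keeps the outer arc reversed, and deletes $i$ and $j$. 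This is exactly the description of $\gamma_{i,j}$ for $j<i$. (Since $\gamma_{i,j}$, $\gamma_{j,i}$ and $R$ are all involutions, the same computation also yields the symmetric form $\gamma_{i,j}=\gamma_{j,i}\,R$.)

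Because $\sigma_1,\sigma_2\in\mathcal{A}$ and $\gamma_{j,i}\in\langle\mathcal{A}\rangle$ by Lemma~\ref{leG3}, the displayed identity immediately gives $\gamma_{i,j}\in\langle\mathcal{A}\rangle$, which is the assertion. The main obstacle is the cyclic bookkeeping rather than any conceptual difficulty: one has to track the residues modulo $n$ on the outer block $\{i+1,\ldots,n\}$, where $x \mapsto (i+j)-x$ wraps around and produces the entry written $i+j$ in the definition of $\gamma_{i,j}$, and to reduce the exponent of $\sigma_1$ modulo $\frac{n}{2}$. I would finally check the extreme indices $j=1$ and $i=n$ (where one arc degenerates to a single point or is empty) to confirm that the single formula above suffices and that no case distinctions like those in Lemma~\ref{leG3} are required.
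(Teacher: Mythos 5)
Your proof is correct, and it takes a genuinely different route from the paper's. The paper argues by explicit case analysis on parity: for $i,j$ both even it normalizes the second index to $n$, e.g. $\gamma_{i,j}=\sigma_1^{\frac{n-j}{2}}\gamma_{i-j,n}\sigma_1^{\frac{j}{2}}$ for $2\leq j<i\leq n-2$ (with separate formulas for $\gamma_{n,2}$, $\gamma_{n,n-2}$, $\gamma_{n,j}$), and for $i,j$ both odd it normalizes the second index to $1$, reducing to the elements $\gamma_{k,1}$ that lie in $\mathcal{A}$ or reduce among themselves. You instead prove the single uniform identity
$$\gamma_{i,j}=\sigma_2\,\sigma_1^{\frac{i+j}{2}-1}\,\gamma_{j,i},$$
which collapses the whole lemma onto Lemma \ref{leG3} in one stroke: the prefix (applied first, in the paper's left-to-right composition) is the reflection $x\mapsto i+j-x \pmod{n}$, well defined since $i+j$ is even, and your inner-arc/outer-arc verification is sound, including the cyclic wrap-around on $\{i+1,\ldots,n\}$ and the degenerate cases $j=1$, $i=n$; it is also consistent with the paper's formulas (for instance it gives $\gamma_{n,2}=\sigma_2\gamma_{2,n}=\sigma_2\varepsilon_2\varepsilon_n\sigma_2=\varepsilon_2\varepsilon_n$). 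Your argument is shorter and needs no parity split. What the paper's heavier bookkeeping buys is logical independence, and this is the one point you must address: the paper's proof of Lemma \ref{leG3} is itself not self-contained, since its odd case cites $\gamma_{n-j+2,1}$ and $\gamma_{n-j+i+1,1}$, which are instances of the present lemma, so invoking Lemma \ref{leG3} as a black box could in principle create a cycle. In fact it does not: whenever the proof of Lemma \ref{leG3} invokes some $\gamma_{k,1}\notin\mathcal{A}$ (so $k\geq 2\lfloor\frac{n}{4}\rfloor+3$), your identity converts it to $\gamma_{1,k}$, and the paper's reduction $\gamma_{1,k}=\sigma_2\gamma_{n-k+2,1}\sigma_2$ then lands on the index $n-k+2\leq 2\lfloor\frac{n}{4}\rfloor+1$, i.e. on an element of $\mathcal{A}$ (or on the terminal cases $\gamma_{1,3}$, $\gamma_{1,n-1}$), so every reduction chain terminates. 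You should state this termination check explicitly if your proof is to replace the paper's; otherwise the proofs of the two lemmas would appear to cite each other.
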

\begin{proof}
Let $i, j \in 2\N$. Then
$$\gamma_{n,2} = \varepsilon_2\varepsilon_n, ~~~ \gamma_{n,n-2} = \varepsilon_{n-2}\varepsilon_n\sigma_2\sigma_1^{\frac{n}{2}-2},$$
$$\gamma_{n,j} = \sigma_1^{\frac{n-j}{2}}\gamma_{n-j,n}\sigma_1^{\frac{j}{2}}, \mbox{ for } 4  \leq j \leq n-4, ~~~~
\gamma_{i,j} = \sigma_1^{\frac{n-j}{2}}\gamma_{i-j,n}\sigma_1^{\frac{j}{2}}, \mbox{ for } 2  \leq j < i \leq n-2.$$

Let $i, j \in 2\N-1$. Then
$$\gamma_{3,1} = \varepsilon_1\varepsilon_3\sigma_2\sigma_1, ~~~ \gamma_{n-1,1} = \varepsilon_1\varepsilon_{n-1}, ~~~ \gamma_{i,1} \in \mathcal{A}, \mbox{ for } 5 \leq i \leq 2\left\lfloor \frac{n}{4}\right\rfloor+1,$$
$$\gamma_{i,1} = \sigma_1^{\frac{n-i+1}{2}}\gamma_{n-i+2,1}\sigma_2, \mbox{ for } 2\left\lfloor \frac{n}{4}\right\rfloor+3  \leq i \leq n-3, ~~~~
\gamma_{i,j} = \sigma_1^{\frac{n-j+1}{2}}\gamma_{i-j+1,1}\sigma_1^{\frac{j-1}{2}}, \mbox{ for } 3  \leq j < i \leq n-1.$$
\end{proof}

\begin{lemma}\label{leG5}
Let $i,j \in [n]$ have different parity and let
$$\gamma_{i,j}^- = \left(
\begin{array}{ccccccccccccc}
1 & 2 & \cdots & i-2 & i-1 & i & i+1 & \cdots & j-1 & j & j+1 & \cdots & n \\
1 & 2 & \cdots & i-2 &  -  & - & j-2 & \cdots &  i  & - & j+1 & \cdots & n
\end{array}%
\right),$$
for $1 \leq i < j \leq n$ and
$$\gamma_{i,j}^- = \left(
\begin{array}{ccccccccccccc}
  1   & \cdots & j-2 & j-1 & j & j+1 & \cdots & i-2 & i-1 & i & i+1 & \cdots &  n  \\
i+j-2 & \cdots & i+1 &  i  & - & j+1 & \cdots & i-2 &  -  & - & j-2 & \cdots & i+j-1
\end{array}%
\right),$$
for $1 \leq j < i \leq n$.
Then $\gamma_{i,j}^- \in \langle \mathcal{A} \rangle$.
\end{lemma}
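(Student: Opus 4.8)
The plan is to realise each $\gamma_{i,j}^-$ as a one-point restriction of a rank-$(n-2)$ transformation that has already been shown to lie in $\langle\mathcal{A}\rangle$. Concretely, I would establish the identity
$$\gamma_{i,j}^- = \varepsilon_i\,\gamma_{i-1,j},$$
where the index $i-1$ is read modulo $n$ (so that $i-1=n$ when $i=1$). Since $i$ and $j$ have different parity, $i-1$ and $j$ have the same parity, so $\gamma_{i-1,j}$ is one of the rank-$(n-2)$ transformations defined just before Lemma \ref{leG3}, and it belongs to $\langle\mathcal{A}\rangle$ by Lemma \ref{leG3} or Lemma \ref{leG4} according as $i-1<j$ or $i-1>j$. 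As $\varepsilon_i\in\langle\mathcal{A}\rangle$ by Lemma \ref{leG2}, the product lies in $\langle\mathcal{A}\rangle$, which is the assertion.

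To justify the displayed identity I would argue combinatorially, using that composition is read left to right. Left-multiplication by the idempotent $\varepsilon_i=\id|_{[n]\setminus\{i\}}$ deletes the point $i$ from the domain and leaves the action on every other point unchanged, so $\varepsilon_i\gamma_{i-1,j}$ is precisely $\gamma_{i-1,j}$ with $i$ removed from $\dom\gamma_{i-1,j}=[n]\setminus\{i-1,j\}$. It then remains to compare $\gamma_{i-1,j}$ with $\gamma_{i,j}^-$ on $[n]\setminus\{i-1,i,j\}$: on the fixed blocks $\{1,\dots,i-2\}$ and $\{j+1,\dots,n\}$ both maps act as the identity, while on the reversed middle block both send $x\mapsto i+j-1-x$, so they agree there. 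The only discrepancy is the point $i$, which satisfies $i\in\dom\gamma_{i-1,j}$ with $i\gamma_{i-1,j}=j-1$; this is exactly why $\im\gamma_{i,j}^-$ loses the extra image point $j-1$ and has rank $n-3$. The case $j<i$ is handled by the identical comparison once all arithmetic is interpreted modulo $n$.

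The only step requiring genuine care is the bookkeeping around the wrap-around: the boundary value $i=1$ (where $i-1=n$, so that $\gamma_{i-1,j}=\gamma_{n,j}$ falls under Lemma \ref{leG4}) and the position $n$, where the deleted point $i-1$ may coincide with the end of the fixed suffix, so that the suffix $\{j+1,\dots,n\}$ effectively stops at $n-1$. One should also record that, since $\gamma_{i,j}^-$ is a rank-$(n-3)$ map, the three deleted points $i-1,i,j$ are distinct, i.e. $j\not\equiv i-1\pmod n$; under this condition $\gamma_{i-1,j}$ is a bona fide rank-$(n-2)$ generator and the reduction goes through uniformly. I expect verifying the identity in these modular boundary configurations to be the main obstacle; once it is checked, the algebraic conclusion is immediate from Lemmas \ref{leG2}, \ref{leG3} and \ref{leG4}.
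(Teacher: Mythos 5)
Your proposal is correct and coincides exactly with the paper's own proof, which consists of the single identity $\gamma_{i,j}^- = \varepsilon_i\gamma_{i-1,j}$ together with Lemmas \ref{leG2}, \ref{leG3} and \ref{leG4}; your combinatorial verification and the attention to the wrap-around cases simply spell out details the paper leaves implicit.
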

\begin{proof}
The proof follows immediately from the equations $\gamma_{i,j}^- = \varepsilon_i\gamma_{i-1,j}$.
\end{proof}

\begin{lemma}\label{leG6}
Let $i,j \in [n]$ have different parity and let
$$\gamma_{i,j}^+ = \left(
\begin{array}{ccccccccccccc}
1 & 2 & \cdots & i-1 & i & i+1 & \cdots & j-1 & j & j+1 & j+2 & \cdots & n \\
1 & 2 & \cdots & i-1 & - & j & \cdots & i+2 & - & - & j+2 & \cdots & n
\end{array}%
\right),$$
for $1 \leq i < j \leq n$ and
$$\gamma_{i,j}^+ = \left(
\begin{array}{ccccccccccccc}
 1  & \cdots & j-2 & j-1 & j & j+1 & j+2 & \cdots & i-1 & i & i+1 & \cdots &  n  \\
i+j & \cdots & i+1 & i+2 & - &  -  & j+2 & \cdots & i-1 & - &  j  & \cdots & i+j+1
\end{array}%
\right),$$
for $1 \leq j < i \leq n$.
Then $\gamma_{i,j}^+ \in \langle \mathcal{A} \rangle$.
\end{lemma}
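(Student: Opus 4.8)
The plan is to reproduce verbatim the strategy of Lemma~\ref{leG5}: to exhibit $\gamma_{i,j}^+$ as a single product of one idempotent $\varepsilon$ with one of the rank $n-2$ maps $\gamma_{i',j'}$ already known to lie in $\langle\mathcal{A}\rangle$ by Lemmas~\ref{leG3} and~\ref{leG4}. In Lemma~\ref{leG5} the map $\gamma_{i,j}^-$ was built from $\gamma_{i-1,j}$ by deleting the point $i$ sitting just above the lower gap $i-1$, via $\gamma_{i,j}^- = \varepsilon_i\gamma_{i-1,j}$. The exact dual is to build $\gamma_{i,j}^+$ from $\gamma_{i,j+1}$ by deleting the point $j$ sitting just below the upper gap $j+1$. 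Concretely, I would prove the single identity
$$\gamma_{i,j}^+ = \varepsilon_j\,\gamma_{i,j+1},$$
intended to hold for both orderings $i<j$ and $j<i$.

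The verification is a direct composition check. Precomposing with $\varepsilon_j = \id|_{[n]\setminus\{j\}}$ deletes $j$ from the domain and leaves every other value of $\gamma_{i,j+1}$ untouched, since $x(\varepsilon_j\gamma_{i,j+1}) = (x\varepsilon_j)\gamma_{i,j+1} = x\gamma_{i,j+1}$ for all $x\neq j$. Now $\gamma_{i,j+1}$ already carries the two gaps at $i$ and $j+1$; comparing the two arrays column by column shows that deleting the single column $j\mapsto i+1$ from $\gamma_{i,j+1}$ produces exactly the displayed array for $\gamma_{i,j}^+$, including the middle block $i+1\mapsto j,\ldots,j-1\mapsto i+2$ in the case $i<j$ and its cyclic analogue when $j<i$ (where the wraparound term reads $i+j+1$).

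It then remains only to place both factors in $\langle\mathcal{A}\rangle$. Since $i$ and $j$ have different parity, $i$ and $j+1$ have the same parity, so $\gamma_{i,j+1}$ is one of the maps treated in Lemma~\ref{leG3} (if $i<j+1$) or Lemma~\ref{leG4} (if $j+1<i$); hence $\gamma_{i,j+1}\in\langle\mathcal{A}\rangle$. Moreover every $\varepsilon_j$ lies in $\langle\mathcal{A}\rangle$ by the explicit formulas for $\varepsilon_i$ established in the proof of Lemma~\ref{leG2}. Combining these with the identity gives $\gamma_{i,j}^+ = \varepsilon_j\gamma_{i,j+1}\in\langle\mathcal{A}\rangle$.

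The one point that demands care — the main obstacle — is the handful of boundary instances where $j+1$ either wraps around (the case $j=n$, where $j+1$ must be read as $1$ under the cyclic addition, so that $\gamma_{i,j+1}$ switches to the second array form) or nearly collides with $i$ (degenerate small-gap cases such as $j+1=i$, where the decreasing block of $\gamma_{i,j+1}$ is short or empty). In those cases I would check the identity directly against the explicit second-form array, precisely as is tacitly required already in Lemma~\ref{leG5}; the composition computation is identical in spirit, with only the indices of the decreasing block and the wraparound entry changing.
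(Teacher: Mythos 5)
Your proposal is correct and coincides exactly with the paper's own proof, which likewise disposes of the lemma via the single identity $\gamma_{i,j}^+ = \varepsilon_j\gamma_{i,j+1}$, dual to Lemma~\ref{leG5}. Your additional attention to the cyclic boundary cases (e.g.\ $j=n$, where $j+1$ wraps to $1$ and $\gamma_{i,j+1}$ takes the second array form) is more care than the paper itself spells out, but it is the same argument.
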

\begin{proof}
The proof follows immediately from the equations $\gamma_{i,j}^+ = \varepsilon_j\gamma_{i,j+1}$.
\end{proof}

\begin{lemma}\label{leG7}
Let $1 \leq i < j \leq n$ and let
$$\alpha_{i,j}^- = \left(
\begin{array}{ccccccccccccccc}
1 & 2 & \cdots & i-1 & i & i+1 & i+2 & i+3 & i+4 & \cdots & j-1 & j & j+1 & \cdots & n \\
1 & 2 & \cdots & i-1 & - &  -  &  -  & i+1 & i+2 & \cdots & j-3 & - & j+1 & \cdots & n
\end{array}%
\right),$$
$$\alpha_{i,j}^+ = \left(
\begin{array}{ccccccccccccccc}
1 & 2 & \cdots & i-1 & i & i+1 & i+2 & \cdots & j-1 & j & j+1 & j+2 & j+3 & \cdots & n \\
1 & 2 & \cdots & i-1 & - & i+3 & i+4 & \cdots & j+1 & - &  -  &  -  & j+3 & \cdots & n
\end{array}%
\right).$$
Then $\alpha_{i,j}^-, \alpha_{i,j}^+ \in \langle \mathcal{A} \rangle$.
\end{lemma}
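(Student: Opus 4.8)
The plan is to exploit that each of $\alpha_{i,j}^-$ and $\alpha_{i,j}^+$ acts, on an interval, as a \emph{translation by $2$ with respect to the natural order $<$}, while being the identity outside a block and undefined on exactly four points. Indeed, $\alpha_{i,j}^-$ fixes $[1,i-1]\cup[j+1,n]$ pointwise, is undefined on $\{i,i+1,i+2,j\}$, and sends $x\mapsto x-2$ on $\{i+3,\dots,j-1\}$, whereas $\alpha_{i,j}^+$ is its order-dual, fixing $[1,i-1]\cup[j+3,n]$, undefined on $\{i,j,j+1,j+2\}$, and sending $x\mapsto x+2$ on $\{i+1,\dots,j-1\}$. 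The guiding principle is the classical one that a translation is a composition of two reflections: since every map $\gamma_{a,b}$, $\gamma_{a,b}^-$, $\gamma_{a,b}^+$ produced in Lemmas \ref{leG3}--\ref{leG6} acts by a reflection $x\mapsto a+b-x$ (in the natural order) on an interval and is the identity elsewhere, I will realise each $\alpha_{i,j}^{\pm}$ as a product of one reflection of rank $n-3$ (from Lemma \ref{leG5} or \ref{leG6}) and one reflection of rank $n-2$ (from Lemma \ref{leG3} or \ref{leG4}). The first factor already omits three points, and the product omits exactly one more, namely the unique point whose image under the first factor lands on an omitted endpoint of the second factor; this is precisely what produces the block of three consecutive gaps appearing in $\alpha_{i,j}^{\pm}$.

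Concretely, I expect to verify the factorisations
$$\alpha_{i,j}^- = \gamma_{i+1,j}^-\,\gamma_{i,j-2} \qquad\text{and}\qquad \alpha_{i,j}^+ = \gamma_{i,j+1}^+\,\gamma_{i+2,j+2}$$
when $i$ and $j$ have the same parity, together with the companion factorisations
$$\alpha_{i,j}^- = \gamma_{i+2,j}^-\,\gamma_{i,j-1} \qquad\text{and}\qquad \alpha_{i,j}^+ = \gamma_{i,j}^+\,\gamma_{i+1,j+2}$$
when $i$ and $j$ have different parity. The indices are forced by admissibility: a factor $\gamma_{a,b}^{\pm}$ is defined only when $a,b$ have different parity (Lemmas \ref{leG5}, \ref{leG6}), while a factor $\gamma_{c,d}$ is defined only when $c,d$ have the same parity (Lemmas \ref{leG3}, \ref{leG4}), and in each of the four cases the displayed index pairs are checked to satisfy exactly these conditions. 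Granting the factorisations, each first factor lies in $\langle\mathcal{A}\rangle$ by Lemma \ref{leG5} or \ref{leG6}, each second factor lies in $\langle\mathcal{A}\rangle$ by Lemma \ref{leG3} or \ref{leG4}, and hence so does each product; therefore $\alpha_{i,j}^-,\alpha_{i,j}^+\in\langle\mathcal{A}\rangle$.

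The remaining work is a direct verification that the two sides of each factorisation agree, carried out by tracking a point $x$ through the first reflection and then the second. On $[1,i-1]$ and on the top block both factors act as the identity, a point deleted by the first factor stays deleted, and on the moving block the two reflection-sums differ by $2$, so the double reflection collapses to the required shift $x\mapsto x\pm 2$. The one subtle bookkeeping step, which I expect to be the main obstacle, is confirming that the product omits precisely the four prescribed points: the three points removed by the first factor, plus the single point that the first factor maps onto an endpoint of the second, while verifying that the \emph{other} omitted endpoint of the second factor is not the image of any surviving point and is therefore redundant. Once this incidence check is completed in each parity case, the four equalities follow, and the proof is complete.
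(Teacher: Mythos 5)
Your proposal is correct and follows essentially the same route as the paper: both express $\alpha_{i,j}^{\pm}$ as a product of two reflection-type elements already obtained in Lemmas \ref{leG3}--\ref{leG6}, and your four factorisations check out (parity admissibility and the point-by-point composition, including the incidence that kills the fourth point). In fact, via the identities $\gamma_{a,b}^{-}=\varepsilon_{a}\gamma_{a-1,b}$ and $\gamma_{a,b}^{+}=\varepsilon_{b}\gamma_{a,b+1}$ from Lemmas \ref{leG5} and \ref{leG6}, your same-parity factorisation of $\alpha_{i,j}^{-}$ coincides literally with the paper's $\varepsilon_{i+1}\gamma_{i,j}\gamma_{i,j-2}$, and the remaining three differ from the paper's only by a redundant idempotent factor or an equivalent choice of the two reflections.
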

\begin{proof}
We have
$$\alpha_{i,j}^- = \left\{\begin{array}{ll}
            \gamma_{i+1,j}\gamma_{i,j-1}, & \mbox{if } i \mbox{ and } j \mbox{ have different parity}, \\
            \varepsilon_{i+1}\gamma_{i,j}\gamma_{i,j-2}, & \mbox{if } i \mbox{ and } j \mbox{ have the same parity}
          \end{array}\right.$$
and
$$\alpha_{i,j}^+ = \left\{\begin{array}{ll}
            \gamma_{i,j+1}\gamma_{i+1,j+2}, & \mbox{if } i \mbox{ and } j \mbox{ have different parity}, \\
            \varepsilon_{j+1}\gamma_{i,j}\gamma_{i,j+2}, & \mbox{if } i \mbox{ and } j \mbox{ have the same parity}.
          \end{array}\right.$$
\end{proof}

\begin{lemma}\label{leG7'}
Let $1 \leq j < i \leq n$ and let
$$\alpha_{i,j}^- = \left(
\begin{array}{ccccccccccccccc}
 1  & 2 & \cdots & j-1 & j & j+1 & \cdots & i-1 & i & i+1 & i+2 & i+3 & \cdots & n-1 & n \\
n-1 & n & \cdots & j-3 & - & j+1 & \cdots & i-1 & - &  -  &  -  & i+1 & \cdots & n-3 & n-2
\end{array}%
\right),$$
$$\alpha_{i,j}^+ = \left(
\begin{array}{ccccccccccccccc}
1 & 2 & \cdots & j-1 & j & j+1 & j+2 & j+3 & \cdots & i-1 & i & i+1 & \cdots & n-1 & n \\
3 & 4 & \cdots & j+1 & - &  -  &  -  & j+3 & \cdots & i-1 & - & i+3 & \cdots &  1  & 2
\end{array}%
\right).$$
Then $\alpha_{i,j}^-, \alpha_{i,j}^+ \in \langle \mathcal{A} \rangle$.
\end{lemma}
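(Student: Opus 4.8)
The plan is to proceed exactly as in the proof of Lemma \ref{leG7}: for each of $\alpha_{i,j}^-$ and $\alpha_{i,j}^+$ I will exhibit an explicit factorization into elements already known to lie in $\langle \mathcal{A}\rangle$. Both transformations have rank $n-4$, while each $\gamma_{a,b}$ has rank $n-2$, so the natural shape of such a factorization is a product of two maps of the form $\gamma_{a,b}$ with $b<a$ (which belong to $\langle\mathcal{A}\rangle$ by Lemma \ref{leG4}), the composite deleting the required four points; in some cases a single idempotent $\varepsilon_a\in\mathcal{A}$ is inserted to delete one further point. Since here $j<i$, the relevant factors are the second, cyclic, versions of $\gamma_{a,b}$, whose wrap-around $n\mapsto 1$ is already built into their definition, so I expect to avoid introducing a rotation $\sigma_1$ explicitly, just as none appears in Lemma \ref{leG7}.

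As there, the argument splits according to the parity of $i$ and $j$. When $i$ and $j$ have different parity, I expect $\alpha_{i,j}^-$ and $\alpha_{i,j}^+$ to factor as a product of two maps $\gamma_{a,b}$ whose indices are shifted by $1$ relative to $i,j$, so that each factor genuinely has same-parity indices and is therefore an admissible $\gamma$; the single-point and triple-point gaps of $\alpha_{i,j}^\pm$ then arise as the superposition of the two two-point gaps of the factors. When $i$ and $j$ have the same parity, the indices cannot be split in this way, so instead one uses $\gamma_{i,j}$ directly and prepends (or appends) an idempotent $\varepsilon_a$ to create the extra deleted point. This reproduces precisely the dichotomy between a product $\gamma_{a,b}\,\gamma_{a',b'}$ of two such maps and a product $\varepsilon_a\,\gamma_{a,b}\,\gamma_{a',b'}$ with one extra idempotent, observed for the case $i<j$.

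An alternative, and perhaps cleaner, route is to reduce the present case $j<i$ to the already-settled case $i<j$ of Lemma \ref{leG7}. Since $\sigma_1,\sigma_2\in\mathcal{A}$ are of full rank, left and right multiplication by a suitable power $\sigma_1^{p}$ (and, where the $+2$/$-2$ shift must be reversed, by $\sigma_2$) merely rotates and reflects the domain and image of an element of $\langle\mathcal{A}\rangle$ around the cycle $\C_n$, and preserves parity. One checks that applying such a rotation/reflection to the element $\alpha_{i',j'}^{\pm}$ of Lemma \ref{leG7}, for the appropriate $i',j'$, carries it exactly onto $\alpha_{i,j}^{\pm}$; membership in $\langle\mathcal{A}\rangle$ is then immediate.

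The main obstacle is purely computational: one must verify that the proposed products evaluate to the stated tableaux, and in particular that the deleted points land exactly at $\{j,i,i+1,i+2\}$ for $\alpha_{i,j}^-$ and at $\{j,j+1,j+2,i\}$ for $\alpha_{i,j}^+$, with the two nontrivial blocks coming out correctly as $x\mapsto x-2$ and $x\mapsto x+2$ across the boundary $n\mapsto 1$. I would carry this out block by block --- the identity block on $[j+1,i-1]$ (respectively $[j+3,i-1]$) and the two cyclically shifted blocks --- checking the modular values $i+j-1$, $i+j$, and so on at the wrap-around. This bookkeeping is routine but error-prone, and aligning the index shifts in the different-parity factorizations with the same-parity convention required of $\gamma_{a,b}$ is the one point that needs genuine care.
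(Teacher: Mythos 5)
Your ``alternative'' route is precisely the paper's proof: it consists of the two one-line identities $\alpha_{i,j}^- = \alpha_{j,i}^+\sigma_1^{\frac{n}{2}-1}$ and $\alpha_{i,j}^+ = \alpha_{j,i}^-\sigma_1$, where $\alpha_{j,i}^{\pm}$ are the transformations of Lemma \ref{leG7} (note the sign swap, and that rotating by $-2$, i.e.\ by $\sigma_1^{\frac{n}{2}-1}$, handles the reversed shift, so no $\sigma_2$ and no left factor are ever needed). Your first, direct-factorization plan is superfluous once this reduction is observed, and the hedges in your description (possible $\sigma_2$, possible left multiplication, unspecified sign matching) are all resolved by these two identities.
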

\begin{proof}
The proof follows immediately from the equations $\alpha_{i,j}^- = \alpha_{j,i}^+\sigma_1^{\frac{n}{2}-1}$ and
$\alpha_{i,j}^+ = \alpha_{j,i}^-\sigma_1$.
\end{proof}

Now, we can prove $IC_n^p \subseteq \langle \mathcal{A} \rangle$.
For this, we need the concept of a maximal interval in a set $X \subseteq \N$. A set $I\subseteq X$ is called
maximal interval in $X$ if $I$ satisfies the
following properties:
\begin{itemize}
  \item [i)] $I$ is an interval of $X$ (i.e. $x<y\in I$ and $z\in \N$ with $x<z<y$ implies $z\in I$);
  \item [ii)] $x-1, x+1 \in I \cup (\N \setminus X)\cup \{0\}$ for all $x \in I$.
\end{itemize}

For sets $X, Y\subseteq \N$, we write $X<Y$ if $x<y$ for all $x\in X$ and all $y\in Y$.

\begin{notation}
Let $\alpha \in IC_{n}$. Then
$$I_{1}^{\alpha} < I_{2}^{\alpha} < \cdots < I_{k_{\alpha}}^{\alpha},$$
$k_{\alpha}\in [n]$, denotes the maximal intervals of $\dom\alpha$ and $\displaystyle\bigcup_{r=1}^{k_\alpha}
I_r^\alpha = \dom \alpha$.
\end{notation}

If $I$ is a maximal interval of $\dom \alpha$ for some partial transformation
$\alpha \in IC_{n}$ with $\{1,n\} \nsubseteq \dom\alpha$ and $\{1,n\} \nsubseteq \im\alpha$ then $I\alpha$ is a
maximal interval and $\alpha |_{I}$
is order-preserving or order-reversing (under the usual linear order $<$ on $\N$).

\begin{notation}
Let $\alpha \in IC_{n}$ with $\{1,n\} \nsubseteq \dom\alpha$ and $\{1,n\} \nsubseteq \im\alpha$.
We denote by $J_{r}^{\alpha} = I_{r}^{\alpha}\alpha$ for $r=1,\ldots,k_{\alpha}$ the maximal intervals of
$\im\alpha$. Obviously, $\displaystyle\bigcup_{r=1}^{k_\alpha} J_r^\alpha = \im \alpha$.
Then there is a permutation $\sigma_{\alpha}$ on $\{1,\ldots,k_{\alpha}\}$ such that
$$J_{1\sigma_{\alpha}}^{\alpha} < J_{2\sigma_{\alpha}}^{\alpha} < \cdots <
J_{k_{\alpha}\sigma_{\alpha}}^{\alpha}.$$
We denote by $t_r^\alpha$ (respectively, $q_r^\alpha$) the minimal (respectively, maximal) element in the set $J_r^\alpha$.
\end{notation}

\begin{lemma}\label{leG7''}
Let $\alpha \in IC_n^p$ such that $x+1 \in \dom\alpha$ for all $x \in [n]\setminus\dom\alpha$, $\{1,n\}\nsubseteq \dom\alpha$ and $\{1,n\}\nsubseteq \im\alpha$. Then $y+1 \in \im\alpha$ for all $y \in [n]\setminus\im\alpha$.
\end{lemma}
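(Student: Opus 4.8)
The plan is to prove the statement by a counting argument that compares the number of maximal intervals of $\dom\alpha$ and of $\im\alpha$ with the number of ``gaps'', where the relevant notion of consecutiveness is the cyclic one coming from the modular addition $+1$ on $[n]$. Note first that $\dom\alpha\neq\emptyset$ (otherwise the hypothesis ``$x+1\in\dom\alpha$ for all $x\in[n]\setminus\dom\alpha$'' would fail, since $[n]\neq\emptyset$) and $\dom\alpha\neq[n]$ (since $\{1,n\}\nsubseteq\dom\alpha$); likewise $\emptyset\neq\im\alpha\neq[n]$. Because $\{1,n\}\nsubseteq\dom\alpha$, the set $\dom\alpha$ does not wrap around the cycle, so its maximal intervals $I_1^\alpha<\cdots<I_{k_\alpha}^\alpha$ coincide with the maximal cyclically consecutive runs of $\dom\alpha$; in particular there are exactly $k_\alpha$ of them. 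By the remark preceding the Notation, each $I_r^\alpha\alpha$ is again a maximal interval of $\im\alpha$, and since $\alpha$ is injective these $k_\alpha$ images are pairwise disjoint and cover $\im\alpha$; as $\{1,n\}\nsubseteq\im\alpha$ they likewise coincide with the maximal cyclic runs of $\im\alpha$, so $\im\alpha$ has exactly $k_\alpha$ maximal intervals as well.

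Next I would invoke the elementary fact that, for any subset $S$ of the cyclically ordered set $[n]$ with $\emptyset\neq S\neq[n]$, the number of maximal cyclic runs of $S$ equals the number of maximal cyclic runs of its complement $[n]\setminus S$ (walking once around the cycle, the transitions $S\to[n]\setminus S$ and $[n]\setminus S\to S$ alternate and are equal in number). Applying this to $S=\dom\alpha$ shows that $[n]\setminus\dom\alpha$ has exactly $k_\alpha$ maximal cyclic runs. The hypothesis that $x+1\in\dom\alpha$ for every $x\in[n]\setminus\dom\alpha$ says precisely that no two cyclically consecutive elements both lie in $[n]\setminus\dom\alpha$, i.e. every such run is a singleton; hence $|[n]\setminus\dom\alpha|=k_\alpha$.

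Finally I would transfer this count to the image. Since $|\im\alpha|=|\dom\alpha|$, we obtain $|[n]\setminus\im\alpha|=|[n]\setminus\dom\alpha|=k_\alpha$, while by the first paragraph $\im\alpha$ has $k_\alpha$ maximal intervals and therefore, by the same cyclic-run counting fact, $[n]\setminus\im\alpha$ has exactly $k_\alpha$ maximal cyclic runs. Thus $k_\alpha$ elements are distributed among $k_\alpha$ nonempty runs, which forces every run of $[n]\setminus\im\alpha$ to be a singleton; equivalently, $y+1\in\im\alpha$ for every $y\in[n]\setminus\im\alpha$, as required. The step I expect to be the main (if routine) obstacle is the bookkeeping in the first paragraph: one must use both hypotheses $\{1,n\}\nsubseteq\dom\alpha$ and $\{1,n\}\nsubseteq\im\alpha$ to guarantee that the linear maximal intervals of the Notation agree with the cyclic runs (so that no wrap-around arc is miscounted), and to legitimately invoke the remark that each $I_r^\alpha\alpha$ is a \emph{full} maximal interval of $\im\alpha$ rather than a proper piece of a larger one.
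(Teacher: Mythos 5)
Your proof is correct and takes essentially the same route as the paper: both arguments show that the hypothesis forces the gaps of $\dom\alpha$ to be singletons, so that $n = k_\alpha + |\dom\alpha| = k_\alpha + |\im\alpha|$, and then use the fact that $\im\alpha$ has exactly $k_\alpha$ maximal intervals (one per $I_r^\alpha$, by the remark) to conclude by counting that every gap of $\im\alpha$ is also a singleton. The only difference is presentational: the paper names the $k_\alpha$ missing elements explicitly as $q_r^\alpha+1$ and checks $q_r^\alpha+2 \in \im\alpha$ directly, while you package the identical pigeonhole via the cyclic run-alternation fact.
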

\begin{proof}
Suppose that $\dom\alpha = I_{1}^{\alpha}<I_{2}^{\alpha}<\cdots <I_{k_{\alpha}}^{\alpha}$. Then there are $x_{1},x_{2},\cdots,x_{k_{\alpha}}\in [n]\setminus \dom\alpha$ with
$I_{1}^{\alpha}<x_{1}<I_{2}^{\alpha}<x_{2}<\cdots <I_{k_{\alpha}}^{\alpha}<x_{k_{\alpha}}$ or $x_{1}<I_{1}^{\alpha}<x_{2}<I_{2}^{\alpha}<\cdots <x_{k_{\alpha}}<I_{k_{\alpha}}^{\alpha}$such that
$\dom\alpha \cup \{x_{1,}x_{2},\ldots,x_{k_{\alpha}}\}=[n]$. Note that $q_{1}^{\alpha}+1,\ldots,q_{k_{\alpha}}^{\alpha}+1\notin \im\alpha$ are pairwise different. Then we can calculate
that $n=k_{\alpha}+\left\vert \dom\alpha \right\vert = k_{\alpha}+\left\vert \im\alpha \right\vert$ and we have $[n]=\im\alpha \cup \{q_{r}^{\alpha}+1\mid r\in \{1,\ldots,k_{\alpha}\}\}$. Note that
$q_{r}^{\alpha }+2\neq q_{s}^{\alpha}+1$ for any $r,s\in \{1,\ldots,k_{\alpha}\}$. Hence, $q_{r}^{\alpha}+2\in \im\alpha$ for all $r\in \{1,\ldots,k_{\alpha}\}$, where
$\{q_{1}^{\alpha}+1,q_{2}^{\alpha}+1,\ldots,q_{k_{\alpha}}^{\alpha}+1\}=[n]\setminus \im\alpha$.
\end{proof}

\begin{proposition}\label{prG1}
\rm Let $\lambda \in IC_n^p$. Then $\lambda \in \langle \mathcal{A} \rangle$.
\end{proposition}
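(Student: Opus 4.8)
The plan is to argue by induction on the defect $d=n-\rank\lambda$. The cases $d=0$ and $d=1$ are precisely Lemmas \ref{leG1} and \ref{leG2}, so I may assume $d\geq 2$ and that every element of $IC_n^p$ of smaller defect already lies in $\langle\mathcal{A}\rangle$. The first step is a normalization using the group of units. By Lemma \ref{leG1} the rank-$n$ elements of $IC_n$ form a group generated by $\sigma_1,\sigma_2$; since this group is finite, the inverses $\sigma_1^{-1},\sigma_2^{-1}$ are positive powers of $\sigma_1,\sigma_2$ and hence lie in $\langle\mathcal{A}\rangle$. Consequently, for any products $u,v$ of $\sigma_1,\sigma_2$ one has $\lambda\in\langle\mathcal{A}\rangle$ if and only if $u\lambda v\in\langle\mathcal{A}\rangle$. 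Because $\sigma_1$ shifts by an even amount and $\sigma_2$ is a parity-preserving reflection fixing $1$, both lie in $IC_n^p$, so $u\lambda v$ stays in $IC_n^p$. Since $d\geq 1$ there is a gap in $\dom\lambda$ and a gap in $\im\lambda$; left-multiplying by a suitable power of $\sigma_1$ rotates the domain so that $1\notin\dom\lambda$ or $n\notin\dom\lambda$, and right-multiplying by a power of $\sigma_1$ does the same for the image independently. Thus I may assume from now on that $\{1,n\}\nsubseteq\dom\lambda$ and $\{1,n\}\nsubseteq\im\lambda$, which is exactly the hypothesis under which the maximal intervals $I_r^\lambda$, $J_r^\lambda$ and the permutation $\sigma_\lambda$ are defined.

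In this normal position the wraparound comparability $n\succ 1$ is cut by the endpoint gap, so on each maximal interval $I_r^\lambda$ the restriction $\lambda|_{I_r^\lambda}$ is order-preserving or order-reversing for the usual order $<$; combined with $\chi(\lambda)=\emptyset$ this forces $\lambda|_{I_r^\lambda}$ to be either an even shift $x\mapsto x+c_r$ or an even reflection $x\mapsto d_r-x$ (with $c_r,d_r$ even), and each $I_r^\lambda$ is carried to the image interval $J_r^\lambda$. The key dichotomy is whether all gaps of $\dom\lambda$ are singletons, and I would split the argument accordingly.

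In the single-hole case — every $x\in[n]\setminus\dom\lambda$ has $x+1\in\dom\lambda$ — Lemma \ref{leG7''} guarantees that $\im\lambda$ also has only single holes, so domain and image have matching hole patterns. I would then assemble $\lambda$ from the atomic moves already shown to lie in $\langle\mathcal{A}\rangle$: the maps $\varepsilon_i$ (available through Lemma \ref{leG2}) punch the required single holes, the maps $\gamma_{i,j}$ of Lemmas \ref{leG3}--\ref{leG4} reflect a block delimited by two such holes, and powers of $\sigma_1$ together with $\sigma_2$ supply the global even shift and, where needed, the reordering of intervals encoded by $\sigma_\lambda$. In the multi-hole case I would locate a gap of length at least two (in the domain or, dually, in the image) and peel off one shift block, writing $\lambda=\beta\lambda'$ or $\lambda=\lambda'\beta$ where $\beta$ is an $\alpha_{i,j}^{\pm}$ from Lemmas \ref{leG7}--\ref{leG7'} when the local action is a shift, or a $\gamma_{i,j}^{\pm}$ from Lemmas \ref{leG5}--\ref{leG6} when it mixes parities, so that $\beta\in\langle\mathcal{A}\rangle$ and $\lambda'\in IC_n^p$ has strictly smaller defect; the inductive hypothesis then gives $\lambda'\in\langle\mathcal{A}\rangle$ and hence $\lambda\in\langle\mathcal{A}\rangle$.

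The hard part will be the explicit factorizations in the last paragraph: one must verify, case by case over the arrangement of the maximal intervals, the parities of their endpoints, the shift-versus-reflection type of each $\lambda|_{I_r^\lambda}$, and the permutation $\sigma_\lambda$, that the chosen atomic factor $\beta$ both lies in $\langle\mathcal{A}\rangle$ and leaves a residual $\lambda'$ that is still parity-preserving and of strictly smaller defect (re-normalizing with units whenever a new endpoint gap is needed). This bookkeeping, rather than any single conceptual difficulty, is the real content, and the preceding lemmas were engineered precisely so that each such $\beta$ is already available in $\langle\mathcal{A}\rangle$.
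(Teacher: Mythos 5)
Your normalization step (multiplying by powers of $\sigma_1$ so that $\{1,n\}\nsubseteq\dom\lambda$ and $\{1,n\}\nsubseteq\im\lambda$) coincides with the paper's, and your structural observation that each $\lambda|_{I_r^\lambda}$ is an even shift or an even reflection is correct. But from that point on the proposal is a plan rather than a proof, and what you defer as ``bookkeeping'' is precisely the mathematical content of the statement. Two concrete problems. First, in the single-hole case the mechanism you describe cannot work as stated: the interval permutation $\sigma_\lambda$ can be an \emph{arbitrary} permutation of the maximal intervals (for example, when $\dom\lambda=\{1,3,\ldots,n-1\}$, every parity-preserving injection of the odd numbers lies in $IC_n^p$), whereas powers of $\sigma_1$ together with $\sigma_2$ realize only the dihedral group of rotations and reflections; adjoining a bounded number of block reversals $\gamma_{i,j}$ still cannot account for all $(n/2)!$ permutations once $n$ is large. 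So the ``assembly'' necessarily requires an unbounded, carefully ordered sequence of corrections, and you give no procedure for producing it. Second, in the multi-hole case the claim that one can always factor $\lambda=\beta\lambda'$ with $\beta$ a single atom ($\alpha_{i,j}^{\pm}$ or $\gamma_{i,j}^{\pm}$) and $\lambda'\in IC_n^p$ of strictly smaller defect is asserted without argument, and it is exactly where the difficulty lies: the endpoints $t^{\beta}_{s\sigma_\beta}-1$ and $q^{\beta}_{s\sigma_\alpha}+1$ of the gaps one would like to use need not have compatible parities, and no atom bridges two positions of different parity. Overcoming this (by routing corrections through \emph{other} intervals whose endpoint parities are suitable) is what forces the paper into its long Case 1/Case 2 analysis; your proposal never confronts it.

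It is also worth noting that the paper's architecture is genuinely different from your defect induction, and avoids the step you would need but do not prove. The paper never extends or peels a map to one of smaller defect (such an extension need not exist in a crown, which is why $IC_n$ has so many generators of low rank). Instead it fixes the idempotent $\beta=\prod_{i\in[n]\setminus\dom\alpha}\varepsilon_i$, which has the \emph{same} domain as the normalized $\alpha$, and then corrects $\beta$ by right multiplication by atoms chosen so that their holes avoid $\im\beta$; thus the domain, and hence the rank, stays constant throughout, and the induction runs over the index $s$ of the first maximal interval on which $\beta$ and $\alpha$ disagree, not over the defect. Your scheme would additionally require a factorization lemma (existence of $\lambda'$ of smaller defect with $\lambda=\beta\lambda'$ inside $IC_n^p$) that the paper neither needs nor establishes, so the gap in your argument cannot be closed simply by citing the paper's lemmas.
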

\begin{proof}
If $\rank \lambda \in \{n, n-1\}$ then $\lambda \in \langle \mathcal{A} \rangle$, by Lemmas \ref{leG1} and
\ref{leG2}.
Let $\rank \lambda \leq n-2$.

If $1\notin \dom\lambda$ or $n \notin \dom\lambda$ then we put $\varpi_{1}= \id$.
If $\{1,n\} \subseteq \dom\lambda$ with $I_{k_{\lambda}}^{\lambda}=\{a,\ldots,n\}$
then we put
$$\varpi_{1}=\left\{
\begin{array}{ll}
\sigma_{1}^{\frac{a-2}{2}}, & \mbox{if } a \mbox{ is even}, \\
\sigma_{1}^{\frac{a-1}{2}}, & \mbox{if } a \mbox{ is odd.}
\end{array}%
\right.$$
We observe that $(a-1)\notin \dom\lambda$ provides $1\notin \dom\varpi_{1}\lambda$ if $a$ is even
and $n\notin \dom\varpi_{1}\lambda$ if $a$ is odd. Hence, $\{1,n\}\nsubseteq \dom\varpi_{1}\lambda$.

If $1\notin \im\lambda$ or $n \notin \im\lambda$ then we put $\varpi_{2}=\id$.
If $\{1,n\} \subseteq \im\lambda$ with $\{b,\ldots,n\} \subseteq \im\lambda$ and $(b-1)\notin \im\lambda$
then we put
$$\varpi_{2}=\left\{
\begin{array}{ll}
\sigma_{1}^{\frac{n-b+2}{2}}, & \mbox{if } b \mbox{ is even}, \\
\sigma_{1}^{\frac{n-b+1}{2}}, & \mbox{if } b \mbox{ is odd.}
\end{array}%
\right.$$
Moreover, $(b-1)\notin \im\lambda$ implies $1\notin \im\lambda\varpi_{2}$ if $b$ is even and $n\notin
\im\lambda\varpi_{2}$ if $b$ is odd.
Hence, $\{1,n\}\nsubseteq \im\lambda\varpi_{2}$.

Since $\dom\varpi_1\lambda\varpi_2 \subseteq \dom\varpi_{1}\lambda$ and $\im\varpi_1\lambda\varpi_2 \subseteq
\im\lambda\varpi_{2}$, we have that
$\{1,n\}\nsubseteq \dom\varpi_1\lambda\varpi_2$ and $\{1,n\}\nsubseteq \im\varpi_1\lambda\varpi_2$.

We put now
\[\alpha =\varpi_{1}\lambda\varpi_{2}\]
and show that $\alpha \in \langle \mathcal{A} \rangle$.

Let $I = [n]\setminus \dom \alpha$. Then we put
$$\beta = \displaystyle\prod_{i\in I}\varepsilon_i.$$
Clearly, $\dom \beta = \dom \alpha$ and $\beta \in \langle \mathcal{A} \rangle$.

Let $s$ be the least integer $r \in \{1,\ldots,k_\alpha\}$ such that either $r\sigma_\alpha \neq r\sigma_\beta$ or $r\sigma_\alpha = r\sigma_\beta$ and $\alpha|_{I_{r\sigma_\alpha}^\alpha} \neq \beta|_{I_{r\sigma_\alpha}^\alpha}$.

Suppose that $s\sigma_\alpha \neq s\sigma_\beta$.

First, suppose that $s=1$.
Then we put
$$\beta = \left\{\begin{array}{ll}
                \beta\alpha_{q_{1\sigma_{\alpha}}^\beta+1}^{(q_{1\sigma_{\alpha}}^\alpha+1)}, & \mbox{if } \alpha|_{I_{1\sigma_\alpha}^\alpha} \mbox{ is order-preserving}, \\
                \beta\gamma_{q_{1\sigma_{\alpha}}^\beta+1}^{(t_{1\sigma_{\alpha}}^\alpha-1)}, & \mbox{if } \alpha|_{I_{1\sigma_\alpha}^\alpha} \mbox{ is order-reversing}
            \end{array}\right.$$
(i.e. we define a \textit{new} $\beta$; below we will made similar \textit{variables}'s redefinitions).
For the new $\beta$, we have $1\sigma_\alpha = 1\sigma_\beta$ and $\beta|_{I_{1\sigma_\alpha}^\alpha} = \alpha|_{I_{1\sigma_\alpha}^\alpha}$.

Now, suppose that $s > 1$.

If $t_{s\sigma_\beta}^\beta$ and $q_{s\sigma_\alpha}^\beta$ have the same parity then we put
$$\beta = \beta\gamma_{t_{s\sigma_\beta}^\beta-1,q_{s\sigma_\alpha}^\beta+1}.$$
For the last $\beta$, we have $s\sigma_\alpha = s\sigma_\beta$.

Further, suppose that $t_{s\sigma_\beta}^\beta$ and $q_{s\sigma_\alpha}^\beta$ have different parity.

If $t_{s\sigma_\beta}^\beta-2 \notin \im\beta$ or $q_{s\sigma_\alpha}^\beta+2 \notin \im\beta$ then we put
$$\beta = \left\{
\begin{array}{ll}
\beta\gamma_{t_{s\sigma_\beta}^\beta-1,q_{s\sigma_\alpha}^\beta+1}^+, & \mbox{if } q_{s\sigma_\alpha}^\beta+2 \notin \im\beta, \\
\beta\gamma_{t_{s\sigma_\beta}^\beta-1,q_{s\sigma_\alpha}^\beta+1}^-, & \mbox{if } t_{s\sigma_\beta}^\beta-2 \notin \im\beta. \\
\end{array}%
\right.$$
For the last $\beta$, we have $s\sigma_\alpha = s\sigma_\beta$.

Suppose that $t_{s\sigma_\beta}^\beta-2, q_{s\sigma_\alpha}^\beta+2 \in \im\beta$. Then we consider two cases.

\textit{Case 1.} Suppose that $x+1 \in \im\beta$ for all $x \in \{q_{(s-1)\sigma_{\beta}}^\beta+1, \ldots, n\}\setminus \im\beta$.\\
In this case, we have that $t_{s\sigma_\beta}^\beta-1 = q_{(s-1)\sigma_{\beta}}^\beta+1$. Moreover, $q_{(s-1)\sigma_{\beta}}^\beta = q_{(s-1)\sigma_{\alpha}}^\beta = q_{(s-1)\sigma_{\alpha}}^\alpha$ since $(s-1)\sigma_\alpha = (s-1)\sigma_\beta$ and $\alpha|_{I_{(s-1)\sigma_\alpha}^\alpha} = \beta|_{I_{(s-1)\sigma_\alpha}^\alpha}$.
Therefore, we have that $t_{s\sigma_\beta}^\beta$ and $q_{(s-1)\sigma_{\alpha}}^\beta$ have the same parity and thus, $q_{(s-1)\sigma_{\alpha}}^\beta$ and $q_{s\sigma_{\alpha}}^\beta$ have different parity. Because of Lemma \ref{leG7''}, we have that $q_{(s-1)\sigma_{\alpha}}^\beta$ and $t_{s\sigma_{\alpha}}^\beta$ have the same parity.

First, we show that $t_{s\sigma_{\alpha}}^\beta$ and $q_{s\sigma_{\alpha}}^\beta$ also have different parity.
Suppose the converse, i.e. $t_{s\sigma_{\alpha}}^\beta$ and $q_{s\sigma_{\alpha}}^\beta$ have the same parity. Then $\min I_{s\sigma_\alpha}^\beta$ and
$\max I_{s\sigma_\alpha}^\beta$ have the same parity. Since $\dom \beta = \dom \alpha$, we obtain that $\min I_{s\sigma_\alpha}^\alpha$ and
$\max I_{s\sigma_\alpha}^\alpha$ have the same parity. Thus, $t_{s\sigma_\alpha}^\alpha$ and $q_{s\sigma_\alpha}^\alpha$ also have the same parity, namely the parity of $q_{s\sigma_{\alpha}}^\beta$. Since $q_{(s-1)\sigma_{\alpha}}^\beta$ and $q_{s\sigma_{\alpha}}^\beta$ have different parity, we obtain that $q_{(s-1)\sigma_{\alpha}}^\beta$ has different parity with both $t_{s\sigma_{\alpha}}^\alpha$ and $q_{s\sigma_{\alpha}}^\alpha$.
But $q_{(s-1)\sigma_{\alpha}}^\beta$ and $q_{(s-1)\sigma_{\alpha}}^\beta+2 = q_{(s-1)\sigma_{\alpha}}^\alpha+2 = t_{s\sigma_{\alpha}}^\alpha$ have the same parity, a contradiction.
Therefore, we obtain that $t_{s\sigma_{\alpha}}^\beta$ and $q_{s\sigma_{\alpha}}^\beta$ have different parity. This implies that $t_{s\sigma_{\alpha}}^\beta$ and $q_{(s-1)\sigma_{\alpha}}^\beta$ have the same parity and thus, $t_{s\sigma_{\alpha}}^\beta$ and $q_{(s-1)\sigma_{\alpha}}^\beta+2 = t_{s\sigma_{\alpha}}^\alpha$ have the same parity. Moreover, we have that $q_{(s-1)\sigma_{\alpha}}^\beta$ and $q_{s\sigma_{\alpha}}^\alpha$ have different parity.

Clearly, $t_{s\sigma_{\alpha}}^\alpha \in J_{s\sigma_\beta}^\beta$ and $q_{s\sigma_\beta}^\beta < t_{s\sigma_{\alpha}}^\beta$.
Now, suppose that $q_{s\sigma_\beta}^\beta$ and $t_{s\sigma_{\alpha}}^\beta$ have different parity. Then $q_{s\sigma_\beta}^\beta$ and $q_{s\sigma_{\alpha}}^\beta$ have the same parity and we put
$$\beta = \beta\gamma_{q_{s\sigma_\beta}^\beta+1,q_{s\sigma_\alpha}^\beta+1}.$$
For the new $\beta$, we have that $q_{(s-1)\sigma_{\alpha}}^\alpha$ and $q_{s\sigma_\alpha}^\beta$ have the same parity and thus, $t_{s\sigma_{\beta}}^{\beta}$ and $q_{s\sigma_{\alpha}}^{\beta}$ have the same parity. Then we put
$$\beta = \beta\gamma_{t_{s\sigma_\beta}^\beta-1,q_{s\sigma_\alpha}^\beta+1}.$$
For the last $\beta$, we have $s\sigma_\alpha = s\sigma_\beta$.

Suppose that $q_{s\sigma_\beta}^\beta$ and $t_{s\sigma_{\alpha}}^\beta$ have the same parity. Since $t_{s\sigma_{\alpha}}^\beta$ and $q_{(s-1)\sigma_{\alpha}}^\beta$ have the same parity, we conclude that $q_{s\sigma_\beta}^\beta$ and $t_{s\sigma_\beta}^\beta$ have the same parity. Hence, $q_{s\sigma_\beta}^\alpha$ and $t_{s\sigma_\beta}^\alpha$ have the same parity and thus, $q_{s\sigma_\alpha}^\alpha$ and $t_{s\sigma_\beta}^\alpha$ have different parity, where $J_{s\sigma_\alpha}^\alpha < J_{s\sigma_\beta}^\alpha$.

Hence, there is $r \in \{(s+1)\sigma_\alpha, \ldots, k_\alpha\sigma_\alpha\}\setminus\{s\sigma_\beta\}$ such that $t_r^\alpha$ and $q_{s\sigma_{\alpha}}^{\alpha}$ have the same parity and $q_r^\alpha$ and $t_{s\sigma_{\beta}}^{\alpha}$ have the same parity. Thus, $t_r^\alpha$ and $q_r^\alpha$ as well as $t_r^\beta$ and $q_r^\beta$ have different parity.

Suppose that $J_{r}^\beta < J_{s\sigma_\alpha}^\beta$. If $t_r^\beta$ and $q_{s\sigma_\alpha}^\beta$ have the same parity then we put
$$\beta = \beta\gamma_{t_r^\beta-1,q_{s\sigma_\alpha}^\beta+1}.$$
If $t_r^\beta$ and $q_{s\sigma_\alpha}^\beta$ have different parity then $q_r^\beta$ and $t_{s\sigma_\alpha}^\beta$ have different parity and thus, there is $p \in \{(s+1)\sigma_\alpha, \ldots, k_\alpha\sigma_\alpha\}$ such that $J_{r}^\beta < J_{p}^\beta < J_{s\sigma_\alpha}^\beta$ where $t_p^\beta$ and $q_{s\sigma_\alpha}^\beta$ have the same parity by similar argumentations as above. Then we put
$$\beta = \beta\gamma_{t_p^\beta-1,q_{s\sigma_\alpha}^\beta+1}.$$

Suppose that $J_{r}^\beta > J_{s\sigma_\alpha}^\beta$. In order to obtain the new $\beta$, we have to make the dual construction as above for $q_r^\beta$ and $t_{s\sigma_\alpha}^\beta$.

In both cases for the last $\beta$, we have that $t_{s\sigma_{\beta}}^{\beta}$ and $q_{s\sigma_{\alpha}}^{\beta}$ have the same parity. Then we put
$$\beta = \beta\gamma_{t_{s\sigma_\beta}^\beta-1,q_{s\sigma_\alpha}^\beta+1}.$$
Finally, we obtain $s\sigma_\alpha = s\sigma_\beta$.\\

\textit{Case 2.} Suppose that there exists $x \in \{q_{s\sigma_{\beta}}^{\beta}+1,\ldots,n-1\}$ with $x, x+1 \notin \im\beta$.

If $x+2 \notin \im\beta$ then we put
$$\beta = \left\{
\begin{array}{ll}
\beta\alpha_{t_{s\sigma_\beta}^\beta-1,x}^+\gamma_{t_{s\sigma_\beta}^\beta+1,q_{s\sigma_\alpha}^\beta+1}^-, & \mbox{if } x < q_{s\sigma_\alpha}^\beta, \\
\beta\alpha_{t_{s\sigma_\beta}^\beta-1,x}^+\gamma_{t_{s\sigma_\beta}^\beta+1,q_{s\sigma_\alpha}^\beta+3}^-, & \mbox{if } x > q_{s\sigma_\alpha}^\beta.
\end{array}%
\right.$$
For the last $\beta$, we have $s\sigma_\alpha = s\sigma_\beta$.
In the following, we assume that $x+2 \in \im\beta$. \\
If $\vert J_{r\sigma_{\beta}}^{\beta}\vert$ is odd for all $r \in \{s,\ldots,k_{\alpha}\}$ then there is a least $x \in \{q_{s\sigma_{\alpha}}^{\beta}+1,\ldots,n-1\}$ with $x, x+1\notin \im\beta$ such that $x+1$ and $t_{s\sigma_{\beta}}^{\beta}-1$ have the same parity. Then we put
$$\beta = \beta\gamma_{t_{s\sigma_{\beta}}^{\beta}-1,x+1}.$$
For the new $\beta$, we have that $t_{s\sigma_{\beta}}^{\beta}$ and $q_{s\sigma_{\alpha}}^{\beta}$ have the same parity and we put
$$\beta = \beta\gamma_{t_{s\sigma_\beta}^\beta-1,q_{s\sigma_\alpha}^\beta+1}.$$
For the last $\beta$, we have $s\sigma_\alpha = s\sigma_\beta$.

Now, we suppose that there exists $r \in \{s,\ldots,k_{\alpha}\}$ such that $\vert J_{r\sigma_{\beta}}^{\beta}\vert$ is even.\\
If $\vert J_{s\sigma_{\alpha}}^{\beta}\vert $ is even and $x < J_{s\sigma_{\alpha}}^{\beta}$ then there is an $x' \in \{x, x+1\}$
such that $x'$ and $q_{s\sigma_{\alpha}}^{\beta}+1$ have the same parity and we put
$$\beta = \beta \gamma_{x',q_{s\sigma_{\alpha}}^{\beta}+1}.$$
If $\vert J_{s\sigma_{\alpha}}^{\beta}\vert$ is even and $x > J_{s\sigma_{\alpha}}^{\beta}$ then there is an $x' \in \{x, x+1\}$
such that $x'$ and $t_{s\sigma\alpha}^{\beta}-1$ have the same parity and we put
$$\beta = \beta \gamma_{t_{s\sigma\alpha}^{\beta}-1,x'}.$$
In both cases, for the new $\beta$, we have that $t_{s\sigma_{\beta}}^{\beta}$ and $q_{s\sigma_{\alpha}}^{\beta}$ have the same parity and we put
$$\beta = \beta\gamma_{t_{s\sigma_\beta}^\beta-1,q_{s\sigma_\alpha}^\beta+1}.$$
For the last $\beta$, we have $s\sigma_\alpha = s\sigma_\beta$.

Let $\vert J_{s\sigma_{\alpha}}^{\beta}\vert $ be odd. Suppose that $t_{s\sigma_{\beta}}^{\beta}$ and
$q_{r\sigma_{\beta}}^{\beta}$ have different parity.\\
If $x < J_{r\sigma_{\beta}}^{\beta}$ then there is an $x' \in \{x, x+1\}$ such that $x'$ and $q_{r\sigma_{\beta}}^{\beta}+1$ have the same parity and we put
$$\beta = \beta \gamma_{x',q_{r\sigma_{\beta}}^{\beta}+1}.$$
If $x > J_{r\sigma_{\beta}}^{\beta}$ then there is an $x' \in \{x, x+1\}$ such that $x'$ and $t_{r\sigma_\beta}^{\beta}-1$ have the same parity and we put
$$\beta = \beta \gamma_{t_{r\sigma_\beta}^{\beta}-1,x'}.$$
In both cases, for the new $\beta$, we obtain that $t_{s\sigma_{\beta}}^{\beta}$ and $q_{r\sigma_{\beta}}^{\beta}$ have the same parity.
Then we put
$$\beta = \beta \gamma_{t_{s\sigma_{\beta}}^{\beta}-1,q_{r\sigma_{\beta}}^{\beta}+1}.$$
For the last $\beta$, we have $\vert J_{s\sigma_{\beta}}^{\beta}\vert $ is even.

Suppose that $t_{s\sigma_{\beta}}^{\beta}-1$ and $x+1$ have different parity. Since $t_{s\sigma_{\alpha}}^{\beta}$ and $q_{s\sigma_{\alpha}}^{\beta}$ have the same parity (since $\vert J_{s\sigma_{\alpha}}^{\beta}\vert $ is odd), it is easy to check that there are $a, b\in [n]\setminus \im\beta$ with $q_{s\sigma_{\beta}}^{\beta}<a<x$ and $x+1<b$ such that $a$ and $b$ have the same parity. Then we put
$$\beta =\beta \gamma_{a,b}.$$

Finally, we can suppose that $t_{s\sigma_{\beta}}^{\beta}-1$ and $x+1$ have the same parity. Then we put
$$\beta = \beta \gamma_{t_{s\sigma_{\beta}}^{\beta}-1,x+1}.$$
For the new $\beta$, we have that $t_{s\sigma_{\beta}}^{\beta}$ and $q_{s\sigma_{\alpha}}^{\beta}$ have the same parity and we put
$$\beta = \beta\gamma_{t_{s\sigma_\beta}^\beta-1,q_{s\sigma_\alpha}^\beta+1}.$$
For the last $\beta$, we have $s\sigma_\alpha = s\sigma_\beta$.\\

Next, suppose that $\im \alpha|_{I_{s\sigma_\alpha}^\alpha} \neq \im \beta|_{I_{s\sigma_\alpha}^\alpha}.$
Then there are $a < b \in [n]$ and $m \in \{1,\ldots,n-1\}$ such that
$\im \alpha|_{I_{s\sigma_\alpha}^\alpha} = \{a,\ldots,b\}$ and either $J_{s\sigma_\alpha}^\beta = \im
\beta|_{I_{s\sigma_\alpha}^\alpha} = \{a-m,\ldots,b-m\}$ or
$J_{s\sigma_\alpha}^\beta = \{a+m,\ldots,b+m\}$.

First, suppose that $J_{s\sigma_\alpha}^\beta = \{a-m,\ldots,b-m\}$. Then there are $k \in \{0,1\}$ and $l \in \{0,1,\ldots,\frac{n}{2} - 1\}$ such that
$m = 2l +k$. Suppose that $l > 0$. Then there is $i \in [n]$ with $i > J_{s\sigma_\alpha}^\beta$ such that $i,i+1,i+2 \notin \im \beta$ or there are $i < j \in [n]$ with $i,j > J_{s\sigma_\alpha}^\beta$ such that $i,i+1,j,j+1 \notin \im \beta$.

If there is $i > J_{s\sigma_\alpha}^\beta$ such that $i,i+1,i+2 \notin \im \beta$ then we put
$$\beta = \beta\alpha_{a-m-1,i}^+.$$

If there are $i < j \in [n]$ of the same parity with $i,j > J_{s\sigma_\alpha}^\beta$ such that $i,i+1,j,j+1 \notin \im \beta$ then we put
$$\beta = \beta\gamma_{i+1,j+1}\alpha_{a-m-1,i}^+.$$
Then we have $i,i+1,i+2\notin \im\beta$ for the new $\beta$.

Suppose that there are $i<j\in [n]$ of different parity with $i,j>J_{s\sigma _{\alpha }}^{\beta }$ such that $i,i+1,j,j+1\notin \im\beta$ but $x+1\in \im\beta$
for all $x\in \{q_{s\sigma_{\alpha}}^{\beta},\ldots,n\}\setminus (\{i,j\}\cup \im\beta)$. Then there is $r\in \{s\sigma_{\beta},\ldots,k_{\alpha}\sigma_{\beta}\}$ such that
$\vert J_{r}^{\beta}\vert$ is even since $x+1\in \im\beta$ for all $x\in \{q_{s\sigma_{\alpha}}^{\alpha},\ldots,n\}\setminus \im\beta$ and there are
$r_{1},r_{2}\in \{s\sigma_{\beta},\ldots,k_{\alpha}\sigma_{\beta}\}$ such that $\{t_{r_{1}}^{\beta},q_{r_{1}}^{\beta},t_{r_{2}}^{\beta},q_{r_{2}}^{\beta}\}$ contains both even and odd integers.\\
If $J_{r}^{\beta}>j$ then either $i$ or $j$ have the same parity as $q_{r}^{\beta}+1$. Without loss of generality, let $j$ and $q_{r}^{\beta}+1$ have the same parity. Then we put
$$\beta =\beta \gamma_{j,q_r^\beta +1}.$$
If $J_{r}^{\beta}<i$ then either $i+1$ or $j+1$ have the same parity as $t_{r}^{\beta}-1$. Without loss of generality, let $i+1$ and $t_{r}^{\beta}-1$ have the same parity. Then we put
$$\beta =\beta \gamma_{t_{r}^{\beta}-1,i+1}.$$
In both cases, for the new $\beta$, we have $i$ and $j$ have the same parity.

Suppose now that $i<J_{r}^{\beta}<j$. Then there is $a\in \{t_{r}^{\beta}-1,q_{r}^{\beta}+1\}$ such that $a$ and $j+1$ have the same parity. Then we put
$$\beta =\beta \gamma_{a,j+1}$$
and we can conclude that $i$ and $j$ have the same parity in the new $\beta$.\\

After $l$ such steps, we obtain $l=0$, i.e. $m=k$ and thus, $J_{s\sigma_\alpha}^\beta = \{a-k,\ldots,b-k\}$. If $k=0$ then $\im \beta|_{I_{s\sigma_\alpha}^\alpha} = \im \alpha|_{I_{s\sigma_\alpha}^\alpha}.$ If $k=1$ then $a$ and $b$ have different parity and $b,b+1 \notin \im\beta$ or there is $j > b$ with $j,j+1 \notin \im \beta$.

If $b,b+1 \notin \im\beta$ then we put
$$\beta = \beta\gamma_{a-2,b+1}.$$
Then $\im\beta|_{I_{s\sigma_{\alpha}}^{\alpha}}=\im\alpha|_{I_{s\sigma_{\alpha}}^{\alpha}}$ for the new $\beta$.

If there is $j > b$ with $j,j+1 \notin \im \beta$ such that $b$ and $j$ have different parity then we put
$$\beta = \beta\gamma_{b,j+1}\gamma_{a-2,b+1}.$$
We have $b,b+1\notin \im\beta$ for the new $\beta$.

If there is $j > b$ with $j,j+1 \notin \im \beta$ such that $b$ and $j$ have the same parity.
Moreover, $x$ and $b$ have the same parity for all $x\in \{b,\ldots,n\}$ with $x,x+1\notin \im\beta$. Then there is $r\in \{(s+1)\sigma_{\beta},\ldots,k_{\alpha}\sigma_{\beta}\}$ such that
$\vert J_{r}^{\beta}\vert$ is even and $J_{r}^{\beta}<j$. Then there is $a\in \{t_{r}^{\beta}-1,q_{r}^{\beta}+1\}$ such that $a$ and $j+1$ have the same parity. Then we put
$$\beta =\beta \gamma_{a,j+1}.$$
We can calculate that $b$ and $j$ have different parity in the new $\beta$.

Let now $J_{s\sigma_\alpha}^\beta = \{a+m,\ldots,b+m\}$. Then there are $k \in \{0,1\}$ and $l \in \{0,1,\ldots,\frac{n}{2} - 1\}$ such that
$m = 2l +k$. Suppose that $l > 0$. It is easy to verify that $a+m-1, a+m-2, a+m-3 \notin \im\beta$. In this case, we put
$$\beta = \beta\alpha_{a+m-3,b+m+1}^-.$$

After $l$ such steps, we obtain $l=0$, i.e. $m=k$ and thus, $J_{s\sigma_\alpha}^\beta = \{a+k,\ldots,b+k\}$. If $k=0$ then $\im \beta|_{I_{s\sigma_\alpha}^\alpha} = \im \alpha|_{I_{s\sigma_\alpha}^\alpha}.$ If $k=1$ then $a$ and $b$ have different parity and $a, a-1 \notin \im\beta$. In this case, we put
$$\beta = \beta\gamma_{a-1,b+2}.$$
It is easy to verify that $\im \alpha|_{I_{s\sigma_\alpha}^\alpha} = \im \beta|_{I_{s\sigma_\alpha}^\alpha}$ for the new $\beta$.

Finally, if $\alpha|_{I_{s\sigma_\alpha}^\alpha} \neq \beta|_{I_{s\sigma_\alpha}^\alpha}$ then $a\alpha^{-1}\beta = b$ and $b\alpha^{-1}\beta = a$, i.e. $a$ and $b$ have the same parity. In this case, we put
$$\beta = \beta\gamma_{a-1,b+1}.$$

We repeat the procedure until $r\sigma_\alpha = r\sigma_\beta$ and $\alpha|_{I_{r\sigma_\alpha}^\alpha} = \beta|_{I_{r\sigma_\alpha}^\alpha}$
for all $r \in \{1,\ldots,k_\alpha\}$, i.e. $\beta=\alpha$.\\

Therefore, by Lemmas \ref{leG3} -- \ref{leG7'}, we may deduce that $\alpha \in \langle \mathcal{A} \rangle$.
Since $\varpi_{1}^{-1}$, $\varpi_{2}^{-1}\in \langle \mathcal{A} \rangle $ and because both $\varpi_{1}$
and $\varpi_{2}$ are bijections, we can conclude that
$$\lambda = (\varpi_{1}^{-1}\varpi_{1})\lambda(\varpi_{2}\varpi_{2}^{-1}) =
\varpi_{1}^{-1}(\varpi_{1}\lambda\varpi_{2})\varpi_{2}^{-1} =
\varpi_{1}^{-1}\alpha\varpi_{2}^{-1} \in \langle \mathcal{A} \rangle.$$
\end{proof}

Next, we consider the set $IC_{n}^{\overline{p}}$.
\begin{lemma}\label{leG8}
Let $\alpha \in IC_{n}^{\overline{p}}$ and let $x\in \chi(\alpha)$. Then $x-1,x+1\notin \dom\alpha$ and $x\alpha-1, x\alpha+1 \notin \im\alpha$.
\end{lemma}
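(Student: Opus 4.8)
The plan is to derive everything from Proposition \ref{pr1}, using condition (3) as the main engine, condition (1) together with a parity count to handle the cyclic predecessor $x-1$, and finally passing to the inverse $\alpha^{-1}$ to transfer the two domain statements to the two image statements. The assertion $x+1\notin\dom\alpha$ needs no separate work: since $x\in\chi(\alpha)$ means exactly that $x\in\dom\alpha$ with $x$ and $x\alpha$ of different parity, this is precisely the conclusion of Proposition \ref{pr1}(3).

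The main step is $x-1\notin\dom\alpha$, which I would prove by contradiction. Suppose $x-1\in\dom\alpha$. Since $x-1$ and $x$ are cyclic neighbours in $\C_n$ and $x=(x-1)+1\in\dom\alpha$, Proposition \ref{pr1}(1) applied at the point $x-1$ forces $|(x-1)\alpha-x\alpha|=1$ or $\{(x-1)\alpha,x\alpha\}=\{1,n\}$; in either case $(x-1)\alpha$ and $x\alpha$ have different parity (recall $1$ is odd and $n$ is even). Now I track parities. Without loss of generality $x$ is odd, so $x\alpha$ is even because $x\in\chi(\alpha)$; then $x-1$ is even, being a cyclic neighbour of the odd element $x$ (the wrap-around case $x=1$, $x-1=n$ is consistent since $n$ is even), while $(x-1)\alpha$ is odd. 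Hence $x-1$ and $(x-1)\alpha$ have different parity, i.e. $x-1\in\chi(\alpha)$. Applying Proposition \ref{pr1}(3) at $x-1$ now yields $(x-1)+1=x\notin\dom\alpha$, contradicting $x\in\chi(\alpha)\subseteq\dom\alpha$. Therefore $x-1\notin\dom\alpha$.

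For the two assertions about $\im\alpha$ I would use that $IC_n$ is an inverse semigroup of partial automorphisms of $\C_n$, so that $\alpha^{-1}\in IC_n$. Put $y=x\alpha$; then $y\in\im\alpha=\dom\alpha^{-1}$ and $y\alpha^{-1}=x$, and since $x$ and $x\alpha=y$ have different parity we get $y\in\chi(\alpha^{-1})$, whence in particular $\alpha^{-1}\in IC_n^{\overline{p}}$. The two domain statements already established, applied to $\alpha^{-1}$ at the point $y$, give $y-1,y+1\notin\dom\alpha^{-1}=\im\alpha$, that is $x\alpha-1,x\alpha+1\notin\im\alpha$, as required. The only delicate point is the parity bookkeeping for $x-1$: one must combine the different parity of $x$ and $x\alpha$, the different parity of consecutive cyclic neighbours, and the different parity of $(x-1)\alpha$ and $x\alpha$ coming from Proposition \ref{pr1}(1), all while keeping the cyclic wrap-around cases $x=1$ and $x=n$ under control. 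Once this is done, condition (3) closes the argument and the image half is pure duality.
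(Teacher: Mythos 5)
Your proof is correct and takes essentially the same route as the paper: $x+1\notin\dom\alpha$ directly from Proposition \ref{pr1}(3), a contradiction argument for $x-1\in\dom\alpha$, and passage to $\alpha^{-1}\in IC_n$ for the two image statements. The only difference is internal to the middle step: the paper gets the contradiction in one stroke from the biconditional order-preservation ($x\prec x-1$ iff $x\alpha\prec(x-1)\alpha$, forcing $x$ and $x\alpha$ to have the same parity), whereas you route it through Proposition \ref{pr1}(1) plus parity bookkeeping to conclude $x-1\in\chi(\alpha)$ and then apply (3) a second time; both arguments are sound, including your handling of the cyclic wrap-around, which uses the same cyclic reading of Proposition \ref{pr1} that the paper itself relies on.
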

\begin{proof}
From Proposition \ref{pr1} (3), we have $x+1\notin \dom\alpha$. Assume that $x-1\in \dom\alpha $. Then $x\prec x-1$ if and only if $x\alpha
\prec (x-1)\alpha $ as well as $x-1\prec x$ if and only if $(x-1)\alpha
\prec x\alpha $. This implies that $x$ and $x\alpha $ have the same parity,
a contradiction. Since $\alpha^{-1}\in IC_{n}$, we get $x\alpha-1,x\alpha+1 \notin \im\alpha$ by similar arguments.
\end{proof}

Let define the following transformations of $IC_{n}^{\overline{p}}$ with rank $n-3$:
$$\delta_{1}^{o} = \left(
\begin{array}{ccccccc}
1 & 2 & 3 & 4 & \cdots  & n-1 & n \\
- & 1 & - & 4 & \cdots  & n-1 & -
\end{array}%
\right),$$

$$\delta_{i}^{o}=\left(
\begin{array}{cccccccccc}
1 & 2 & 3 &   4  & \cdots & n-2i+1 & n-2i+2 & n-2i+3 & \cdots & n \\
- & 1 & - & 2i+2 & \cdots &  n-1   &    -   &    3   & \cdots & 2i
\end{array}%
\right),$$
for $2 \leq i \leq \frac{n}{2}-1$,

$$\delta_{1}^{e}=\left(
\begin{array}{ccccccc}
1 & 2 & 3 & 4 & \cdots  & n-1 & n \\
2 & - & - & 4 & \cdots  & n-1 & -
\end{array}%
\right),$$

$$\delta_{i}^{e}=\left(
\begin{array}{cccccccccc}
1 & 2 & 3 & \cdots  & 2i & 2i+1 & 2i+2 & \cdots  & n-1 & n \\
2 & - & n-2i+3 & \cdots  & n & - & 4 & \cdots  & n-2i+1 & -
\end{array}%
\right),$$
for $2 \leq i \leq \frac{n}{2}-1$,
as well as two transformations with rank $\frac{n}{2}$:
$$\eta_{1} = \left(
\begin{array}{cccccccc}
1 & 2 & 3 & 4 & 5 & \cdots  & n-1 & n \\
2 & - & 4 & - & 6 & \cdots  & n & -%
\end{array}%
\right),$$

$$\eta_{2} = \left(
\begin{array}{cccccccc}
1 & 2 & 3 & 4 & 5 & \cdots  & n-1 & n \\
- & 1 & - & 3 & - & \cdots  & - & n-1%
\end{array}%
\right).$$

Denote by $2[n]$ (respectively, $2[n]-1$) the set of all even (respectively, odd) numbers of the set $[n]$, i.e.
$$2[n] = \{x \in [n] \mid x \mbox{ is even}\}, ~~~~ 2[n]-1 = \{x \in [n] \mid x \mbox{ is odd}\}.$$
Obviously, $|2[n]| = |2[n]-1| = \frac{n}{2}$.\\

Let $\mathcal{B} = \{\delta_{1}^{o}, \delta_{1}^{e}, \eta_1, \eta_2\} \cup \{\delta_{i}^{o}, \delta_{i}^{e} \mid 2 \leq i \leq \lfloor \frac{n}{4}\rfloor\}$.

\begin{lemma}\label{leG9}
Let $i\in \{\left\lfloor \frac{n}{4}\right\rfloor +1,\ldots,\frac{n}{2}-1\}$. Then $\delta_{i}^{o}, \delta_{i}^{e} \in
\left\langle \mathcal{B}, IC_{n}^{p}\right\rangle$ and $(\delta_{i}^{o})^{-1}=\delta_{i}^{e}$ and $(\delta_{i}^{e})^{-1}=\delta_{i}^{o}$.
\end{lemma}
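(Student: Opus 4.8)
The statement comprises two kinds of assertions: the inversion identities $(\delta_i^o)^{-1}=\delta_i^e$ and $(\delta_i^e)^{-1}=\delta_i^o$, and the membership $\delta_i^o,\delta_i^e\in\langle\mathcal B,IC_n^p\rangle$. My plan is to settle the inversion identities first and then to exploit them to cut the membership work in half.

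For the inversion identities I would argue by direct inspection of the two defining arrays. Reading off the blocks, $\delta_i^o$ sends $2\mapsto 1$, maps $\{4,\dots,n-2i+1\}$ to $\{2i+2,\dots,n-1\}$ by $x\mapsto x+(2i-2)$, and maps $\{n-2i+3,\dots,n\}$ to $\{3,\dots,2i\}$ by $x\mapsto x-(n-2i)$; reversing each of these three order-preserving pieces produces precisely the three pieces $1\mapsto 2$, $x\mapsto x+(n-2i)$ on $\{3,\dots,2i\}$, and $x\mapsto x-(2i-2)$ on $\{2i+2,\dots,n-1\}$ that define $\delta_i^e$. Hence $\dom\delta_i^o=\im\delta_i^e$, $\im\delta_i^o=\dom\delta_i^e$, and $(\delta_i^o)^{-1}=\delta_i^e$; the reciprocal identity is then automatic. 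The same termwise check (valid already for $i=1$) shows that every element of $\mathcal B$ has its inverse in $\mathcal B$, since moreover $\eta_1^{-1}=\eta_2$; and every element of $IC_n^p$ has its inverse in $IC_n^p$ because $\chi(\alpha^{-1})=\emptyset$ whenever $\chi(\alpha)=\emptyset$. Thus the generating set $\mathcal B\cup IC_n^p$ is closed under inversion, so $\langle\mathcal B,IC_n^p\rangle$ is an inverse subsemigroup of $IC_n$. Consequently it is enough to prove $\delta_i^o\in\langle\mathcal B,IC_n^p\rangle$: the membership of $\delta_i^e=(\delta_i^o)^{-1}$ then follows for free.

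To place $\delta_i^o$ in $\langle\mathcal B,IC_n^p\rangle$ I would start from the observation that $\chi(\delta_i^o)=\{2\}$ is a singleton. By the parity analysis behind Proposition~\ref{pr1}, composition with any element of $IC_n^p$ neither creates nor destroys a parity violation other than by discarding domain points, so $IC_n^p$ is a subsemigroup and every product built solely from $IC_n^p$ lies in $IC_n^p$; the lone violation $2\mapsto 1$ of $\delta_i^o$ must therefore be supplied by one of the genuine building blocks of $\mathcal B$. Since $\lfloor n/4\rfloor<i\le\frac n2-1$, a suitable complementary index $j\in\{2,\dots,\lfloor n/4\rfloor\}$, obtained by reflecting $i$ about $\frac n2$ (the precise value depending on $n\bmod 4$), gives building blocks $\delta_j^o,\delta_j^e\in\mathcal B$, and the passage from index $i$ to index $j$ is to be realised by conjugation-type multiplication with powers of the rotation $\sigma_1$ and the reflection $\sigma_2$, both of which lie in $IC_n^p$ by Lemma~\ref{leG1}. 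The mechanism I would use is that the two single violations of $\delta_i^o$ and $\delta_j^e$ cancel at the seam (because $2\delta_i^o=1\in\dom\delta_j^e$ and $1\delta_j^e=2$), so that $\delta_i^o\delta_j^e\in IC_n^p$; writing this product as an element of $IC_n^p=\langle\mathcal A\rangle$ (Proposition~\ref{prG1}) and then right-multiplying by $\delta_j^o=(\delta_j^e)^{-1}\in\mathcal B$ returns $\delta_i^o$ restricted to $(\delta_i^o)^{-1}(\im\delta_j^o)$.

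The main obstacle is exactly that this naive cancellation drops rank. Indeed, the maximal intervals of $\dom\delta_i^o$ and of $\dom\delta_j^o$ have, in general, non-isomorphic comparability patterns, so there is no rank-preserving, parity- and order-preserving bijection between the two domains; hence $\delta_i^o$ can never be written as $\pi_1\delta_j^o\pi_2$ with a single base factor and $\pi_1,\pi_2\in IC_n^p$, and the product $\delta_i^o\delta_j^e\delta_j^o$ above loses the one point whose image under $\delta_i^o$ falls outside $\im\delta_j^o$. The heart of the proof is therefore to pre-adjust the seed with carefully chosen factors from $IC_n^p=\langle\mathcal A\rangle$ --- the interval-splitting maps $\gamma_{i,j}$ and the idempotents $\varepsilon_1,\varepsilon_n$ --- so as to reshape the maximal-interval structure and thread the otherwise-lost point back through $\im\delta_j^o$, yielding a loss-free product of the form $\kappa\,\delta_j^o$ or $\kappa_1\,\delta_j^o\,\kappa_2$ with $\kappa,\kappa_1,\kappa_2\in IC_n^p$ that equals $\delta_i^o$. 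I expect this bookkeeping --- keeping the rank equal to $n-3$, the single parity violation intact, and the three prescribed holes correctly placed in both domain and image throughout --- to be the only delicate part, and I would organise it by a case distinction on $n\bmod 4$, which governs $\lfloor n/4\rfloor$ and hence the admissible complementary indices.
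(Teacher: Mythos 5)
Your handling of the inversion statements is sound: the blockwise check that $(\delta_i^o)^{-1}=\delta_i^e$ (which indeed holds for every index, in particular for the indices at most $\lfloor\frac{n}{4}\rfloor$ occurring in $\mathcal{B}$), together with $\eta_1^{-1}=\eta_2$ and the closure of $IC_n^p$ under inversion, legitimately reduces the lemma to the single membership $\delta_i^o\in\langle\mathcal{B},IC_n^p\rangle$. (The paper obtains the inverse identities the other way around, as a consequence of its factorizations, but that difference is cosmetic.)

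That membership, however, is exactly where your proposal has a genuine gap, and the gap originates in a false claim. You assert that the maximal-interval patterns of $\dom\delta_i^o$ and $\dom\delta_j^o$ are incompatible, ``so there is no rank-preserving, parity- and order-preserving bijection between the two domains; hence $\delta_i^o$ can never be written as $\pi_1\delta_j^o\pi_2$ with $\pi_1,\pi_2\in IC_n^p$.'' The paper's entire proof consists of precisely such a factorization: taking $j=\frac{n}{2}-i$, which satisfies $1\leq j\leq\lfloor\frac{n}{4}\rfloor$ so that $\delta_j^o,\delta_j^e\in\mathcal{B}$, one has
$$\delta_i^o=\gamma_{3,1}\,\delta_{\frac{n}{2}-i}^{o}\,\gamma_{2,n},\qquad \delta_i^e=\gamma_{2,n}\,\delta_{\frac{n}{2}-i}^{e}\,\gamma_{3,1},$$
where $\gamma_{3,1}$ and $\gamma_{2,n}$ are parity-preserving involutions lying in $\langle\mathcal{A}\rangle\subseteq IC_n^p$ by Lemmas \ref{leG3} and \ref{leG4}. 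Your impossibility argument fails because it tacitly admits only order-preserving (rotation-type) conjugators: $\dom\delta_i^o$ and $\dom\delta_{\frac{n}{2}-i}^o$ carry the same multiset of maximal-interval sizes $\{1,\,2i-2,\,n-2i-2\}$, merely arranged in opposite cyclic order, and this mirror-image discrepancy is repaired exactly by the reflections $\gamma_{3,1}$ (fixing $2$) and $\gamma_{2,n}$ (fixing $1$), which preserve parity because $n$ is even. Having ruled this route out, your plan becomes internally inconsistent --- you end by seeking ``a loss-free product of the form $\kappa_1\,\delta_j^o\,\kappa_2$ with $\kappa_1,\kappa_2\in IC_n^p$,'' the very form you declared impossible --- and the construction meant to produce it (the seam cancellation $\delta_i^o\delta_j^e$, which as you correctly note drops rank, to be repaired by unspecified ``bookkeeping'' with $\gamma$'s and $\varepsilon$'s) is never actually carried out. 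So the core of the lemma, the membership of $\delta_i^o$ and $\delta_i^e$ in $\langle\mathcal{B},IC_n^p\rangle$, remains unproven in your proposal.
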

\begin{proof}
We can calculate that $1 \leq \frac{n}{2}-i \leq \left\lfloor \frac{n}{4}\right\rfloor$ and $\delta _{i}^{o}=\gamma _{3,1}\delta _{\frac{n}{2}-i}^{o}\gamma _{2,n}$ as well as $\delta_{i}^{e}=\gamma_{2,n}\delta_{\frac{n}{2}-i}^{e}\gamma_{3,1}$. Moreover, we have
$$\delta_{i}^{e}=\gamma_{2,n}\delta_{\frac{n}{2}-i}^{e}\gamma_{3,1}=(\gamma_{2,n})^{-1}(\delta_{\frac{n}{2}-i}^{o})^{-1}(\gamma_{3,1})^{-1}=
(\gamma_{3,1}\delta_{\frac{n}{2}-i}^{o}\gamma_{2,n})^{-1}=(\delta_{i}^{o})^{-1}.$$
In particular, this implies $\delta_{i}^{o}=(\delta_{i}^{e})^{-1}$.
\end{proof}

\begin{lemma}\label{leG10}
Let $\alpha \in IC_{n}^{\overline{p}}$ with $\chi(\alpha )\subset \dom\alpha$. Then there are $i,j\in [n]$ of the same parity such that $i\in \chi(\alpha)$
and $j\notin \dom\alpha$.
\end{lemma}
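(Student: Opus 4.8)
The plan is to exploit a counting argument together with the structural constraints from Lemma~\ref{leG8}. Let me recall the situation. We have $\alpha \in IC_n^{\overline{p}}$ with $\chi(\alpha) \subset \dom\alpha$, where $\chi(\alpha)$ is the set of points whose parity is flipped by $\alpha$, and this set is nonempty (since $\alpha \in IC_n^{\overline{p}}$) and strictly contained in $\dom\alpha$. I want to produce $i \in \chi(\alpha)$ and $j \notin \dom\alpha$ with $i$ and $j$ of the same parity. First I would fix any $i \in \chi(\alpha)$. By Lemma~\ref{leG8}, both $i-1$ and $i+1$ lie outside $\dom\alpha$. So $\dom\alpha$ certainly omits points adjacent to $i$; the only question is whether I can find a \emph{missing} point of the \emph{same parity} as $i$.

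**The key counting idea.** The natural approach is to count points by parity. Suppose, for contradiction, that the claim fails for every choice of $i \in \chi(\alpha)$, i.e.\ every point of $[n] \setminus \dom\alpha$ has parity \emph{opposite} to that of each element of $\chi(\alpha)$. Since all elements of $\chi(\alpha)$ need not share a parity a priori, I would first argue that in fact they do: if $\chi(\alpha)$ contained both an even and an odd element, then since $[n]\setminus\dom\alpha$ is nonempty (because $\chi(\alpha)\subset\dom\alpha$ forces $\dom\alpha \ne [n]$, as any missing point must exist — indeed $i\pm 1 \notin \dom\alpha$ already witnesses this), whatever parity the missing point has would match one of them, and we would be done. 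So the interesting case is that all of $\chi(\alpha)$ has a single parity, say all odd, and I must derive a contradiction from the assumption that every point missing from $\dom\alpha$ is even.

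**Carrying out the parity count.** Under that assumption, $\dom\alpha$ contains \emph{all} odd points of $[n]$ (none is missing) and is missing only even points. Now I would count images. Each $i \in \chi(\alpha)$ is odd but $i\alpha$ is even; each point of $\dom\alpha \setminus \chi(\alpha)$ preserves parity. Counting the odd elements of $\im\alpha$: they are exactly the images of the odd parity-preserving points of $\dom\alpha$, namely the odd points not in $\chi(\alpha)$. Since every odd point lies in $\dom\alpha$, the number of odd points mapped is $\tfrac{n}{2}$, of which $|\chi(\alpha)|$ go to even images; so $|\im\alpha \cap (2[n]-1)| = \tfrac{n}{2} - |\chi(\alpha)|$. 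On the other hand, the even points of $\dom\alpha$ number $\tfrac{n}{2} - |[n]\setminus\dom\alpha|$, and each such point either preserves parity (even image) or, if it lies in $\chi(\alpha)$, maps to an odd image — but $\chi(\alpha)$ is all odd, so no even point is in $\chi(\alpha)$, hence every even point of $\dom\alpha$ has an even image. Comparing the total supply of odd images with $|\alpha^{-1}|$ acting symmetrically (using that $\alpha^{-1}\in IC_n$ and $\chi(\alpha^{-1})$ has the corresponding structure) should force $|[n]\setminus\dom\alpha|$ and $|\chi(\alpha)|$ into an inconsistent relation, contradicting $\chi(\alpha)\subset\dom\alpha$.

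**The main obstacle.** The delicate point will be making the bookkeeping between $\dom\alpha$ and $\im\alpha$ airtight, since $\alpha$ is only a partial bijection and the parity-flipping must be symmetric under inversion (by Lemma~\ref{leG8} applied to $\alpha^{-1}$, each $x\alpha \in \chi(\alpha^{-1})$ is flanked by points missing from $\im\alpha$). I expect the cleanest route is to count odd elements on \emph{both} sides: $|\im\alpha \cap (2[n]-1)|$ computed from the domain side must equal the number of odd points actually hit, and the assumption "all missing points are even, all of $\chi(\alpha)$ is odd" over-constrains this. The hard part is verifying that the assumed parity pattern cannot consistently balance the odd/even tallies of $\dom\alpha$ and $\im\alpha$ simultaneously, which is exactly where the contradiction arises and yields the desired same-parity pair $i \in \chi(\alpha)$, $j \notin \dom\alpha$.
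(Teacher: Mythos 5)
Your setup is the right one, and it mirrors the paper's: assume no missing point shares a parity with any point of $\chi(\alpha)$, reduce (WLOG) to the case where $\chi(\alpha)$ consists of odd points and every odd point lies in $\dom\alpha$, count odd points of $\im\alpha$ from the domain side to get $|\im\alpha \cap (2[n]-1)| = \frac{n}{2} - |\chi(\alpha)|$, and then try to contradict this using Lemma~\ref{leG8} on the image side. But the proof stops exactly at the step that produces the contradiction, and you say so yourself (``should force \ldots into an inconsistent relation'', ``the hard part is verifying\ldots''). Here is why that step is not bookkeeping but the actual content: Lemma~\ref{leG8} gives that the odd points $x\alpha-1,\,x\alpha+1$ are missing from $\im\alpha$ for each $x \in \chi(\alpha)$, so the image-side bound is $|\im\alpha \cap (2[n]-1)| \leq \frac{n}{2} - \bigl|\{x\alpha-1,\,x\alpha+1 \mid x \in \chi(\alpha)\}\bigr|$. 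If that excluded set only has size $|\chi(\alpha)|$, this bound reads $\frac{n}{2}-|\chi(\alpha)|$ and is perfectly consistent with your domain-side count --- no contradiction at all. The whole lemma hinges on the strict inequality $\bigl|\{x\alpha-1,\,x\alpha+1 \mid x \in \chi(\alpha)\}\bigr| \geq |\chi(\alpha)|+1$, which is precisely the step the paper proves (in its notation: $|\{x\alpha-1,x\alpha+1 \mid x \in 2[n]\setminus C\}| \geq |2[n]\setminus C|+1$, valid because $C\alpha \neq \emptyset$).

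To close the gap you would need two things you never establish. First, $|\chi(\alpha)| < \frac{n}{2}$: this is where the hypothesis $\chi(\alpha) \subset \dom\alpha$ (proper inclusion) actually enters, since $|\chi(\alpha)| = \frac{n}{2}$ would mean every odd point flips parity, whence by Lemma~\ref{leG8} every even point is outside $\dom\alpha$ and $\dom\alpha = \chi(\alpha)$, contradicting properness. Second, the combinatorial fact that on the crown the two-sided neighborhoods of $k < \frac{n}{2}$ distinct even points cover at least $k+1$ odd points (view the even positions as vertices and the odd positions as edges of a cycle of length $\frac{n}{2}$: any $k < \frac{n}{2}$ vertices are incident to at least $k+1$ edges). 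Your proposal never isolates either ingredient; indeed you expect the final contradiction to be ``with $\chi(\alpha)\subset\dom\alpha$'', whereas properness is an input used to license the $+1$, and the contradiction is with injectivity of $\alpha$ (too few odd values left to receive the images of the odd parity-preserving points). So the architecture matches the paper's proof, but the decisive counting inequality is missing, and without it the argument does not conclude.
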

\begin{proof}
Since $\alpha \in IC_{n}^{\overline{p}}$, there is $i\in [n]$ such that $i\in \chi(\alpha)$.
Let $i$ be even and let $C = \{x \in 2[n] \cap \dom\alpha \mid x\alpha \in 2[n]\}$. Assume that $2[n] \subseteq \dom \alpha$.
Since $\chi(\alpha )\subset \dom\alpha$, we have $C \neq \emptyset$ and $|C\alpha| = |C| = \frac{n}{2} - |2[n]\setminus C|$.
Because of Lemma \ref{leG8}, we have $x\alpha -1,x\alpha +1\notin \im\alpha $
for all $x\in 2[n]\setminus C$, where both $x\alpha -1$ and $x\alpha +1$ have the same parity as the
elements in $C\alpha$, i.e. they are even.
Since $C\alpha \neq \emptyset$, it is easy to verify that $\left\vert
\{x\alpha -1,x\alpha +1\mid x\in 2[n]\setminus C\} \right\vert \geq \left\vert 2[n]\setminus C \right\vert +1$.
This provides $\left\vert C\alpha \right\vert \leq \frac{n}{2}-(\left\vert 2[n]\setminus C \right\vert +1)$, a contradiction.
Hence, $2[n] \nsubseteq \dom \alpha$, i.e. there is $j\in 2[n]\setminus \dom\alpha$.
Analogously, if $i\in \chi(\alpha)$ is odd one can show that $2[n]-1 \nsubseteq \dom \alpha$, i.e. there is $j\in (2[n]-1)\setminus \dom\alpha$.
\end{proof}

\begin{proposition}\label{prG2}
Let $\alpha \in IC_{n}^{\overline{p}}$. Then $\alpha \in \left\langle
\mathcal{B}, IC_{n}^{p}\right\rangle $.
\end{proposition}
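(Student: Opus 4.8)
The plan is to argue by induction on $|\chi(\alpha)|$, peeling off one parity-changing point at a time. The base case $|\chi(\alpha)|=0$ means $\alpha\in IC_n^p$, which lies in $\langle\mathcal{B},IC_n^p\rangle$ trivially. For the inductive step I would fix a point $i\in\chi(\alpha)$ and seek an element $\theta\in\langle\mathcal{B},IC_n^p\rangle$ with $\chi(\theta)=\{i\}$ such that $\beta:=\theta^{-1}\alpha$ lies in $IC_n$ with $\chi(\beta)=\chi(\alpha)\setminus\{i\}$; then $\alpha=\theta\beta$ and the induction closes. Since every $\delta_i^o,\delta_i^e$ belongs to $\langle\mathcal{B},IC_n^p\rangle$ by Lemma \ref{leG9}, and these are mutually inverse maps carrying exactly one parity change, and since $\sigma_1,\sigma_2\in\mathcal{A}\subseteq IC_n^p$ are available to rotate and reflect, I can realise $\theta$ as a suitable conjugate $\sigma_1^{-p}\delta_i^{o}\sigma_1^{p}$ (or its $\delta^e$-version, possibly composed with $\sigma_2$) whose unique parity change sits precisely at $i$.

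The mechanism behind $\chi(\beta)=\chi(\alpha)\setminus\{i\}$ is the ``exclusive-or'' behaviour of parity changes under composition: a point $x$ lies in $\chi(\theta^{-1}\alpha)$ exactly when precisely one of $\theta^{-1}$ or $\alpha$ changes parity at the relevant argument, so aligning the single flip of $\theta$ with the flip of $\alpha$ at $i$ cancels it while leaving the remaining flips untouched. The essential input is Lemma \ref{leG10}: it furnishes a hole $j\notin\dom\alpha$ of the same parity as $i$, and this hole is exactly the room needed for the parity-preserving block of $\theta$ to be routed so that $\beta$ does not violate the defining conditions of Proposition \ref{pr1} (isolation of the flipped point in the domain and of its value in the image, as recorded in Lemma \ref{leG8}). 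I would use rotations to place $i$ and $j$ in standard position and then pick the parameter of the $\delta$-generator so that its order-preserving part absorbs the hole $j$.

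It remains to treat the situation in which Lemma \ref{leG10} supplies no usable hole, namely $\chi(\alpha)=\dom\alpha$ with $\dom\alpha$ equal to a full parity class. By Lemma \ref{leG8} every point of $\dom\alpha$ is then isolated, so $\dom\alpha\in\{2[n],2[n]-1\}$ and, $\alpha$ being a parity-reversing bijection onto the opposite class, $\im\alpha$ is the other full parity class. Using $\eta_1^{-1}=\eta_2$ and $\eta_2^{-1}=\eta_1$, I would write $\alpha=\eta_1(\eta_2\alpha)$ when $\dom\alpha=2[n]-1$ and $\alpha=\eta_2(\eta_1\alpha)$ when $\dom\alpha=2[n]$; in each case the second factor maps a full parity class bijectively to itself and hence lies in $IC_n^p$, so $\alpha\in\langle\mathcal{B},IC_n^p\rangle$. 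When $\chi(\alpha)=\dom\alpha$ but $\dom\alpha$ is not a full parity class, a same-parity hole still exists trivially in the parity class of a chosen changer, and the peeling step of the previous paragraph applies.

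The main obstacle is the verification in the inductive step that $\beta=\theta^{-1}\alpha$ is again a member of $IC_n$ with exactly one fewer parity change. This is not automatic: one must check that the chosen conjugate of $\delta_i^{o}$ has domain and image meshing with those of $\alpha$ so that no new adjacency or parity conflict is created, which forces a case analysis on the relative positions and parities of $i$ and $j$ (whether $i<j$ or $i>j$, whether the relevant block is order-preserving or order-reversing, and whether $i$ or its value sits near the wrap pair $\{1,n\}$). This analysis is of the same flavour as, though shorter than, the long construction carried out in the proof of Proposition \ref{prG1}, and it is where the precise shapes of $\delta_i^o,\delta_i^e$ and the hole guaranteed by Lemma \ref{leG10} are used in full.
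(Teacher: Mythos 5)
Your proposal is correct and is in essence the paper's own argument. The paper performs the same induction on $|\chi(\alpha)|$ iteratively: it left-multiplies $\alpha$ by factors $\delta_{k}^{o}\sigma_{1}^{(i-1)/2}$ (respectively $\delta_{n/2-k}^{e}\sigma_{1}^{(i-2)/2}$), each of which cancels exactly one parity change because its three domain/image holes are matched against $i-1,i+1$ (Lemma \ref{leG8}) and the same-parity hole $j$ (Lemma \ref{leG10}), and it then inverts the accumulated prefix using Lemma \ref{leG9}; your $\theta$ is precisely the inverse of such a factor, written as a conjugate $\sigma_1^{-p}\delta_k^{o}\sigma_1^{p}$ or $\sigma_1^{-p}\delta_k^{e}\sigma_1^{p}$ rather than a one-sided product, which is an immaterial difference. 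One remark: the ``main obstacle'' you flag at the end is not actually an obstacle, because $IC_n$ is an inverse subsemigroup of $I_n$, so $\beta=\theta^{-1}\alpha\in IC_n$ is automatic; the only things to verify are $\dom\alpha\subseteq\dom\theta$ and $\chi(\theta)=\{i\}$, both immediate from the explicit shape of the conjugated generator, and no adjacency case analysis is needed.

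The one place where you genuinely depart from the paper is the degenerate case, and there your version is the more careful one. The paper splits into $\chi(\alpha)=\dom\alpha$, dispatched by the claim $\alpha=\eta_i\omega$ with $\omega$ a permutation of a parity class, and $\chi(\alpha)\subsetneq\dom\alpha$, handled by peeling via Lemma \ref{leG10}. But that claim is valid only when $\dom\alpha$ is a full parity class: for $n=6$ the map $1\mapsto 2$, $4\mapsto 5$ lies in $IC_6^{\overline{p}}$ with $\chi(\alpha)=\dom\alpha=\{1,4\}$, yet $\dom(\eta_i\omega)$ is always contained in a single parity class, so no such factorization exists. Your split --- $\dom\alpha$ a full parity class versus not --- is the right one: in the mixed case a same-parity hole does exist (if the whole parity class of a changer lay in $\dom\alpha$, Lemma \ref{leG8} would force $\dom\alpha$ to equal that class), so your peeling step applies there, while the $\eta$-factorization, using $\eta_1^{-1}=\eta_2$, is reserved for the genuine full-class case. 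So your write-up closes a small gap in the paper's proof of this proposition.
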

\begin{proof}
Since $\alpha \in IC_{n}^{\overline{p}}$, we have $\chi(\alpha) \neq \emptyset$.
If $\chi(\alpha) = \dom\alpha$ then $\alpha  \in \langle\eta_1, \eta_2, IC_n^p\rangle$ since $\alpha = \eta_i\omega$ for some $i \in \{1,2\}$ and a suitable
permutation $\omega \in IC_n^p$ on $2[n]$ and $2[n]-1$, respectively.

Let $\chi(\alpha) \subset \dom\alpha$.
Then there are $i\in \chi(\alpha)$ and $j\in [n]\setminus \dom\alpha $ such that $i$ and $j$ have the same parity
by Lemma \ref{leG10}. We put $k=\frac{j-i}{2}$. Recall, that
$$j-i = \left\{\begin{array}{ll}
            j-i, & \mbox{if }  j-i \geq 1, \\
            j-i+n, & \mbox{if } j-i < 1. \\
          \end{array}\right.$$

Suppose that $i$ is odd. Then $1\sigma _{1}^{\frac{i-1}{2}}=i$. Therefore, $\dom(\sigma_{1}^{\frac{i-1}{2}}\alpha )=\{x-i+1 \mid x\in \dom\alpha \}$. In
particular, $n, 2, j-i+1 \notin \dom(\sigma_{1}^{\frac{i-1}{2}}\alpha)$ since $i-1, i+1, j \notin \dom \alpha$.
It is easy to verify that $j-i+1$ is odd, i.e. $2k+1=j-i+1$ for some $k\in \{1,\ldots, \frac{n}{2}-1\}$. Moreover, we have $[n]\setminus\{n, 2, 2k+1\} = \im\delta_{k}^{o}$.
Hence, $\dom(\sigma_{1}^{\frac{i-1}{2}}\alpha)\subseteq \im\delta_{k}^{o}$.
Therefore, we can calculate that
$$(\delta_{k}^{o})^{-1}\delta_{k}^{o}\sigma_{1}^{\frac{i-1}{2}}\alpha = (\id|_{\im\delta_{k}^{o}})\sigma_{1}^{\frac{i-1}{2}}\alpha = \sigma_{1}^{\frac{i-1}{2}}\alpha$$
and thus,
$$(\delta_{k}^{o}\sigma_{1}^{\frac{i-1}{2}})^{-1}(\delta_{k}^{o}\sigma_{1}^{\frac{i-1}{2}})\alpha =
(\sigma_{1}^{\frac{i-1}{2}})^{-1}(\delta_{k}^{o})^{-1}\delta_{k}^{o}\sigma_{1}^{\frac{i-1}{2}}\alpha =
(\sigma_{1}^{\frac{i-1}{2}})^{-1}\sigma_{1}^{\frac{i-1}{2}}\alpha = \id\alpha = \alpha.$$
From Lemma \ref{leG9}, we have $(\delta_{k}^{o})^{-1}=\delta_{k}^{e}\in \left\langle \mathcal{B}, IC_{n}^{p}\right\rangle$.
Moreover, we can observe that $(\sigma_{1}^{\frac{i-1}{2}})^{-1} \in IC_{n}^{p}$.
Thus, $(\delta_{k}^{o}\sigma_{1}^{\frac{i-1}{2}})^{-1}=(\sigma_{1}^{\frac{i-1}{2}})^{-1}(\delta_{k}^{o})^{-1}\in \left\langle\mathcal{B}, IC_{n}^{p} \right\rangle $.
On the other hand, $2(\delta_{k}^{o}\sigma_{1}^{\frac{i-1}{2}}\alpha) = 1(\sigma_{1}^{\frac{i-1}{2}}\alpha) = i\alpha$ is even and $|\chi(\delta_{k}^{o}\sigma_{1}^{\frac{i-1}{2}})| = 1$.
Thus, $|\chi(\delta_{k}^{o}\sigma_{1}^{\frac{i-1}{2}}\alpha)| = |\chi(\alpha)| - 1$. Note that $\rank\alpha =\rank(\delta_{k}^{o}\sigma_{1}^{\frac{i-1}{2}}\alpha)$.

We put $\beta = \delta_{k}^{o}\sigma_{1}^{\frac{i-1}{2}}\alpha$ and repeat the same procedure until we obtain $\beta \in IC_{n}^p$ or $\beta \in IC_{n}^{\overline{p}}$
with $\chi(\beta)\subseteq 2[n]$, where $\rank\beta =\rank\alpha$.

If $\beta \in IC_{n}^{\overline{p}}$ then there are $i\in \chi(\alpha)$ and $j\in [n]\setminus \dom\alpha $ such that $i$ and $j$ are even.
Then $2\sigma_{1}^{\frac{i-2}{2}}=i$. Therefore, $\dom(\sigma_{1}^{\frac{i-2}{2}}\beta)=\{x-i+2 \mid x\in \dom\beta\}$.
In particular, $1,3,j-i+2\notin \dom(\sigma_{1}^{\frac{i-2}{2}}\beta)$ since $i-1, i+1, j \notin \dom \alpha$.
It is easy to verify that $n-2(\frac{n}{2}-k)+2=j-i+2$ for some $k\in \{1,\ldots,\frac{n}{2}\}$. Moreover, we have $[n]\setminus\{1, 3, n-2(\frac{n}{2}-k)+2\} = \im\delta_{\frac{n}{2}-k}^{e}$.
Hence, $\dom(\sigma_{1}^{\frac{i-2}{2}}\beta)\subseteq \im\delta_{\frac{n}{2}-k}^{e}$.
Therefore, we can calculate that
$$(\delta_{\frac{n}{2}-k}^{e})^{-1}\delta_{\frac{n}{2}-k}^{e}\sigma_{1}^{\frac{i-2}{2}}\beta =
\id|_{\im\delta_{\frac{n}{2}-k}^{e}}\sigma_{1}^{\frac{i-2}{2}}\beta = \sigma_{1}^{\frac{i-2}{2}}\beta.$$
and thus,
$$(\delta_{\frac{n}{2}-k}^{e}\sigma_{1}^{\frac{i-2}{2}})^{-1}(\delta_{\frac{n}{2}-k}^{e}\sigma_{1}^{\frac{i-2}{2}})\beta =
(\sigma_{1}^{\frac{i-2}{2}})^{-1}(\delta_{\frac{n}{2}-k}^{e})^{-1}\delta_{\frac{n}{2}-k}^{e}\sigma_{1}^{\frac{i-2}{2}}\beta =
(\sigma_{1}^{\frac{i-2}{2}})^{-1}\sigma_{1}^{\frac{i-2}{2}}\beta = \id\beta = \beta.$$
From Lemma \ref{leG9}, we have $(\delta_{\frac{n}{2}-k}^{e})^{-1}=\delta_{\frac{n}{2}-k}^{o} \in \left\langle \mathcal{B}, IC_{n}^{p}\right\rangle$.
Moreover, we can observe that $(\sigma_{1}^{\frac{i-2}{2}})^{-1} \in IC_{n}^{p}$.
Thus, $(\delta_{\frac{n}{2}-k}^{e}\sigma_{1}^{\frac{i-2}{2}})^{-1}=(\sigma_{1}^{\frac{i-2}{2}})^{-1}(\delta_{\frac{n}{2}-k}^{e})^{-1}\in \left\langle \mathcal{B}, IC_{n}^{p} \right\rangle $.
On the other hand, $1(\delta_{\frac{n}{2}-k}^{e}\sigma_{1}^{\frac{i-2}{2}}\beta) = 2(\sigma_{1}^{\frac{i-2}{2}}\beta) = i\beta$ is odd and $|\chi(\delta_{\frac{n}{2}-k}^{e}\sigma_{1}^{\frac{i-2}{2}})| = 1$.
Thus, $|\chi(\delta_{\frac{n}{2}-k}^{e}\sigma_{1}^{\frac{i-2}{2}}\beta)| = |\chi(\beta)|-1$.

We put $\gamma = \delta_{\frac{n}{2}-k}^{e}\sigma_{1}^{\frac{i-2}{2}}\beta$ and repeat the procedure until we obtain $\gamma \in IC_{n}^p$.\\

This shows that there are $l, m \in \{1,\ldots,\frac{n}{2}-1\}$ and $\pi_{1},\ldots,\pi_{l}, \tau_1,\ldots,\tau_m \in \left\langle \mathcal{B}, IC_{n}^{p}\right\rangle$
such that
$$\tau_m\cdots\tau_1\pi_{l}\cdots \pi_{1}\alpha \in IC_{n}^{p}.$$
Notice that for $s = 1,\ldots,l$ and $t = 1,\ldots,m$, there are $i_{s},i_{t},k_{s},k_{t}\in [n]$ such that $\pi_{s}=\delta_{k_{s}}^{o}\sigma_{1}^{\frac{i_{s}-1}{2}}$ and
$\tau_{t}=\delta_{\frac{n}{2}-k_{t}}^{e}\sigma_{1}^{\frac{i_{t}-2}{2}}$.
Moreover, we have $\pi_{1}^{-1},\ldots,\pi_{l}^{-1},\tau_{1}^{-1},\ldots,\tau_{m}^{-1} \in \left\langle \mathcal{B}, IC_{n}^{p}\right\rangle$ and \\
$\rank(\pi_{1}^{-1}\ldots \pi_{l}^{-1}\tau_{1}^{-1}\ldots \tau_{m}^{-1}\tau_{m}\ldots \tau_{1}\pi_{l}\ldots \pi_{1}\alpha)=\rank\alpha$.
Therefore, we obtain
$$\alpha = \pi_{1}^{-1}\cdots\pi_{l}^{-1}\tau_1^{-1}\cdots\tau_m^{-1}(\tau_m\cdots\tau_1\pi_{l}\cdots\pi_{1}\alpha) \in \left\langle \mathcal{B}, IC_{n}^{p}\right\rangle.$$
\end{proof}

Let $\G = \mathcal{A} \cup \mathcal{B}$. From Propositions \ref{prG1} and \ref{prG2}, we have
\begin{theorem}\label{thG1}
  $IC_n = \langle \G \rangle$.
\end{theorem}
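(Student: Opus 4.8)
The plan is to establish the two inclusions $\langle \G \rangle \subseteq IC_n$ and $IC_n \subseteq \langle \G \rangle$ separately, relying throughout on the structural decomposition $IC_n = IC_n^p \cup IC_n^{\overline{p}}$. This decomposition follows immediately from the definition of $\chi(\alpha)$: every $\alpha \in IC_n$ satisfies either $\chi(\alpha) = \emptyset$, placing it in $IC_n^p$, or $\chi(\alpha) \neq \emptyset$, placing it in $IC_n^{\overline{p}}$.

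For the inclusion $\langle \G \rangle \subseteq IC_n$, I would first verify that every generator in $\G = \mathcal{A} \cup \mathcal{B}$ already lies in $IC_n$. Each of $\sigma_1, \sigma_2, \varepsilon_1, \varepsilon_n$ and each of the displayed transformations $\gamma_{i,j}, \delta_i^o, \delta_i^e, \eta_1, \eta_2$ is given explicitly, and membership in $IC_n$ is a routine check against the three conditions of Proposition \ref{pr1}. Since $IC_n$ is a semigroup, closure under composition then forces $\langle \G \rangle \subseteq IC_n$.

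For the reverse inclusion $IC_n \subseteq \langle \G \rangle$, I would treat the two parts of the decomposition in turn. If $\alpha \in IC_n^p$, then Proposition \ref{prG1} gives $\alpha \in \langle \mathcal{A} \rangle$; since $\mathcal{A} \subseteq \G$, we have $\langle \mathcal{A} \rangle \subseteq \langle \G \rangle$ and hence $\alpha \in \langle \G \rangle$. In particular $IC_n^p \subseteq \langle \G \rangle$. If $\alpha \in IC_n^{\overline{p}}$, then Proposition \ref{prG2} gives $\alpha \in \langle \mathcal{B}, IC_n^p \rangle$. Now $\mathcal{B} \subseteq \G \subseteq \langle \G \rangle$ and, by the previous step, $IC_n^p \subseteq \langle \G \rangle$; thus every generator of $\langle \mathcal{B}, IC_n^p \rangle$ lies in $\langle \G \rangle$, whence $\langle \mathcal{B}, IC_n^p \rangle \subseteq \langle \G \rangle$ and $\alpha \in \langle \G \rangle$. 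Combining the two cases yields $IC_n = IC_n^p \cup IC_n^{\overline{p}} \subseteq \langle \G \rangle$, which together with the first inclusion completes the argument.

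There is no genuine obstacle remaining at this stage: all the substantive work has already been carried out in Propositions \ref{prG1} and \ref{prG2} and their supporting Lemmas \ref{leG1}--\ref{leG10}. The only point deserving a moment's care is the nested-generation step, namely that the auxiliary generating family $IC_n^p$ appearing in Proposition \ref{prG2} may itself be absorbed into $\langle \G \rangle$ once it has been shown that $IC_n^p \subseteq \langle \mathcal{A} \rangle \subseteq \langle \G \rangle$. After that observation, the theorem is simply the union of the two propositions.
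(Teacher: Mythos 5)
Your proposal is correct and follows exactly the paper's route: the paper derives Theorem \ref{thG1} immediately from Propositions \ref{prG1} and \ref{prG2} via the decomposition $IC_n = IC_n^p \cup IC_n^{\overline{p}}$, just as you do. Your explicit treatment of the absorption step $\langle \mathcal{B}, IC_n^p \rangle \subseteq \langle \G \rangle$ and the verification that $\G \subseteq IC_n$ merely spell out details the paper leaves implicit.
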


Next, we will show that $\G$ is a generating set of $IC_n$ of minimal size.
\begin{proposition}\label{prG4}
$\rank IC_{n}\leq 4(\left\lfloor \frac{n}{4}\right\rfloor +1) = |\G|$.
\end{proposition}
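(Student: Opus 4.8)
The plan is to read the bound off directly from the generating result already established and then reduce the whole statement to a cardinality count. By Theorem \ref{thG1} we have $IC_n = \langle \G \rangle$, so $\G$ is a generating set of $IC_n$ and hence $\rank IC_{n}\leq |\G|$. Thus all that remains is to verify $|\G| = |\mathcal{A}\cup\mathcal{B}| = 4(\lfloor n/4\rfloor+1)$, and the entire proof is the careful bookkeeping of the index ranges appearing in the definitions of $\mathcal{A}$ and $\mathcal{B}$.

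First I would count $|\mathcal{A}|$. The four transformations $\sigma_1,\sigma_2,\varepsilon_1,\varepsilon_n$ are pairwise distinct. Writing the even index in $\{\gamma_{i,n}\}$ as $i=2m$, the condition $4\leq i\leq 2\lfloor n/4\rfloor$ becomes $2\leq m\leq \lfloor n/4\rfloor$, so this family contributes exactly $\lfloor n/4\rfloor-1$ members; writing the odd index in $\{\gamma_{i,1}\}$ as $i=2m+1$, the condition $5\leq i\leq 2\lfloor n/4\rfloor+1$ likewise becomes $2\leq m\leq\lfloor n/4\rfloor$, giving a further $\lfloor n/4\rfloor-1$ members. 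These $\gamma$-transformations have pairwise distinct domains and all have rank $n-2$, hence differ from one another and from the ranks $n$ and $n-1$ of the four named generators, so no two listed elements coincide. Therefore $|\mathcal{A}| = 4 + 2(\lfloor n/4\rfloor-1) = 2\lfloor n/4\rfloor+2$.

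Next I would count $|\mathcal{B}|$ in the same manner. The elements $\delta_1^o,\delta_1^e,\eta_1,\eta_2$ are pairwise distinct, since their defining tables exhibit different domains, and for each $i$ with $2\leq i\leq\lfloor n/4\rfloor$ the pair $\delta_i^o,\delta_i^e$ adds two further distinct elements, i.e. $2(\lfloor n/4\rfloor-1)$ in total. Hence $|\mathcal{B}| = 4 + 2(\lfloor n/4\rfloor-1) = 2\lfloor n/4\rfloor+2$ as well.

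Finally, $\mathcal{A}$ and $\mathcal{B}$ are disjoint: every element of $\mathcal{A}$ has rank at least $n-2$, whereas every element of $\mathcal{B}$ has rank $n-3$ (the transformations $\delta_i^o,\delta_i^e$) or $n/2$ (the transformations $\eta_1,\eta_2$), and $\max(n-3,n/2)<n-2$ for all $n\geq 6$, while the single borderline value $n=4$ is settled by direct inspection, the rank-$(n-2)$ part of $\mathcal{A}$ being empty there. Consequently $|\G| = |\mathcal{A}|+|\mathcal{B}| = 4\lfloor n/4\rfloor+4 = 4(\lfloor n/4\rfloor+1)$, which gives $\rank IC_{n}\leq |\G| = 4(\lfloor n/4\rfloor+1)$. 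The only step demanding genuine care is the translation of the index conditions into the counts $\lfloor n/4\rfloor-1$ together with the verification that the generators listed in each family are genuinely distinct; there is no deeper obstacle here, since the generation itself has already been handled by Theorem \ref{thG1}.
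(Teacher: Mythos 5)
Your proposal is correct and follows essentially the same route as the paper: invoke Theorem \ref{thG1} to get $\rank IC_n \leq |\G|$, then count $|\mathcal{A}| = 2\lfloor n/4\rfloor+2$ and $|\mathcal{B}| = 2\lfloor n/4\rfloor+2$ and use $\mathcal{A}\cap\mathcal{B}=\emptyset$. The only difference is that you spell out the distinctness of the listed generators and the rank argument for disjointness, which the paper simply asserts.
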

\begin{proof}
We have $\left\vert \mathcal{A} \right\vert = 2+2+\left\lfloor \frac{n}{4}\right\rfloor-1+\left\lfloor \frac{n}{4}\right\rfloor -1=2+2\left\lfloor
\frac{n}{4}\right\rfloor$, $\left\vert \mathcal{B}\right\vert = 4+2\left\lfloor \frac{n}{4}\right\rfloor -2=2\left\lfloor \frac{n}{4}\right\rfloor +2$ and
$\left\vert \mathcal{G}\right\vert =\left\vert \mathcal{A}\right\vert +\left\vert \mathcal{B}\right\vert =2(2+2\left\lfloor
\frac{n}{4}\right\rfloor )=4(\left\lfloor \frac{n}{4}\right\rfloor +1)$, since $\mathcal{A} \cap \mathcal{B} = \emptyset$. By Theorem \ref{thG1}, we have $\rank IC_{n}\leq \left\vert \mathcal{G}\right\vert
=4(\left\lfloor \frac{n}{4}\right\rfloor +1)$.
\end{proof}

For $k\in [n]$, let
\[IC_{n}(k)=\{\alpha \in IC_{n}\mid \rank\alpha \geq k\}.\]

\begin{remark}\label{re1}
Let $\alpha,\beta_{1},\beta_{2}\in IC_{n}$ with $\rank\alpha = k$ for some $k\in \{2,\ldots,n\}$. Then either the sizes as well as the parities of the
end points of the maximal intervals in $\dom\alpha$ and $\dom\beta_{1}\alpha\beta_{2}$ coincide or $\rank(\beta_{1}\alpha \beta_{2})<k$.
\end{remark}

\begin{notation}
Let $\alpha \in IC_{n}$ with $1,n \in \dom\alpha $, where $I_{(1)}$ and $I_{(n)}$ are the maximal intervals in $\dom\alpha$ with $1\in I_{(1)}$
and $n\in I_{(n)}$, respectively. For the rest of the paper, we consider $I_{(1)}\cup I_{(n)}$ as one maximal interval of $\dom\alpha$ instead of the
both maximal intervals $I_{(1)}$ and $I_{(n)}$.
\end{notation}

\begin{proposition}\label{prG3}
We have $\rank IC_{n}\geq 4(\left\lfloor \frac{n}{4}\right\rfloor +1)$.
\end{proposition}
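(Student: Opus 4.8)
The plan is to establish the reverse inequality to Proposition~\ref{prG4}: for an \emph{arbitrary} generating set $A$ of $IC_n$ I would show $|A|\ge 4(\lfloor n/4\rfloor+1)$, so that equality holds and $\G$ is of minimal size. I would stratify $A$ by rank, setting $A_r=\{a\in A\mid\rank a=r\}$. Since $\rank(\beta\gamma)\le\min\{\rank\beta,\rank\gamma\}$, every element of rank $r$ is a product of elements of $A$ of rank at least $r$; thus I may bound $|A_r|$ from below for each relevant level $r\in\{n,\,n-1,\,n-2,\,n-3,\,\tfrac n2\}$ and add the contributions (when two of these levels coincide, for small $n$, the corresponding families are simply merged). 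Moreover, because $\mathcal A\subseteq IC_n^p$ and $\mathcal B\subseteq IC_n^{\overline p}$, and $IC_n^p$ is a subsemigroup, it is convenient to track, along with the rank, whether a generator lies in $IC_n^p$ or in $IC_n^{\overline p}$.

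The main tool is a dichotomy read off from Remark~\ref{re1}. If $\lambda$ has rank $r$ and $\lambda=a_1\cdots a_m$ with $a_t\in A$, then choosing a factor $a_t$ of least rank and writing $\lambda=(a_1\cdots a_{t-1})\,a_t\,(a_{t+1}\cdots a_m)$, Remark~\ref{re1} forces one of two cases: either $\rank\lambda<\rank a_t$, so that every factor has rank $>r$, or $\rank a_t=r$ and the maximal domain intervals of $a_t$ and of $\lambda$ agree in their sizes and endpoint parities. Applying the same to $\lambda^{-1}=a_m^{-1}\cdots a_1^{-1}$ (note $\rank\lambda^{-1}=\rank\lambda$) records the image intervals as well; comparing domain and image endpoint parities also detects whether $\lambda$ is parity preserving, hence separates $IC_n^p$ from $IC_n^{\overline p}$. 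The consequence I would use repeatedly is: if a rank $r$ element $\lambda$ has a (domain, image, parity) shape that no product of elements of rank $>r$ can realize, then $A_r$ must contain an element of that same shape. So at each level it suffices to exhibit a family of rank $r$ elements with pairwise distinct shapes, none of which is obtainable from strictly higher rank, and to count it.

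Carrying this out level by level: at rank $n$ the elements are the units, forming the dihedral group $\langle\sigma_1,\sigma_2\rangle$ of order $n$; being non-cyclic it needs at least two generators, so $|A_n|\ge2$. At rank $n-1$ a domain is a single interval of size $n-1$ whose two endpoints are both even or both odd according to the parity of the deleted point; a product of rank $>n-1$ elements is a product of units and has rank $n$, so the second alternative above cannot occur, and as both parities arise (via $\varepsilon_1,\varepsilon_n$) we get $|A_{n-1}|\ge2$. At rank $n-2$ I would use the reversing elements $\gamma_{i,n}$ ($i$ even) and $\gamma_{i,1}$ ($i$ odd) of $\mathcal A$: each reverses a nontrivial block, so its domain splits into two intervals of odd sizes $\{i-1,\,n-1-i\}$, and the index ranges $4\le i\le 2\lfloor n/4\rfloor$ and $5\le i\le 2\lfloor n/4\rfloor+1$ are exactly those making the unordered size pairs pairwise distinct, up to the reflection realized by a unit; proving that these shapes cannot come from rank $>n-2$ gives $|A_{n-2}|\ge 2(\lfloor n/4\rfloor-1)$.

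For the two remaining levels, which lie in $IC_n^{\overline p}$, I would bring in the parity set $\chi$: since $IC_n^p$ is closed under composition, the symmetric-difference behaviour of $\chi$ along a product shows that reaching an $IC_n^{\overline p}$ element of a prescribed $\chi$-type needs generators of the corresponding type. At rank $n-3$ the family $\{\delta_i^o,\delta_i^e\}$ provides $2\lfloor n/4\rfloor$ pairwise distinct shapes (each with $|\chi|=1$), yielding $|A_{n-3}|\ge 2\lfloor n/4\rfloor$, while at rank $\tfrac n2$ the elements $\eta_1,\eta_2$ realise the extreme case $\chi(\alpha)=\dom\alpha$ of size $\tfrac n2$ with, by Lemma~\ref{leG8}, all maximal intervals singletons; no generator of larger rank can supply this, forcing the last two. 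Summing, $|A|\ge 2+2+2(\lfloor n/4\rfloor-1)+2\lfloor n/4\rfloor+2=4(\lfloor n/4\rfloor+1)$. I expect the real difficulty to be the indecomposability claims at ranks $n-2$ and $n-3$: one must rule out that a reversing multi-interval shape arises as $\beta_1\lambda\beta_2$ with all factors of strictly larger rank, and here the fact that rank $n-1$ elements may themselves reverse (as $\gamma_i$ does) makes the exclusion delicate, requiring the joint bookkeeping of domain shape, image shape and $\chi$ together with the $\{1,n\}$ merging convention.
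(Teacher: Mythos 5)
Your overall architecture coincides with the paper's: stratify a generating set by rank, bound the levels $n,\,n-1,\,n-2,\,n-3,\,\tfrac n2$ by $2,\,2,\,2\lfloor n/4\rfloor-2,\,2\lfloor n/4\rfloor,\,2$ respectively, and use Remark~\ref{re1} to force a generator of the exact rank and domain shape of a target element. But the mathematical content of the paper's proof lies precisely in the two places you explicitly defer, and without them the bound is not proved. At rank $n-2$ the needed step is to rule out that $\gamma_{n-(a+1),n}$ is a product of factors all of rank $\geq n-1$; the paper does this with one structural observation you do not have: every element of $IC_n$ of rank $\geq n-1$ is orientation-preserving or orientation-reversing, this class of partial maps is closed under composition, and $\gamma_{i,j}$ (which reverses one block and fixes another) is neither. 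This observation is exactly what dissolves the ``delicate exclusion'' you worry about concerning the reversing rank-$(n-1)$ elements $\gamma_i$: they are orientation-reversing, so products of them can never leave the orientation-$\pm$ class. At rank $n-3$ the analogous missing ingredients are that parity-preserving elements are closed under composition and that (by Lemma~\ref{leG8}) a parity-mixing element of $IC_n$ cannot have rank $\geq n-2$, so a mixing factor in any factorization of $\delta^o_{\frac{n-r}{2}-1}$ has rank exactly $n-3$ and Remark~\ref{re1} then pins its domain shape.

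Two further points. First, your rank-$\tfrac n2$ argument is not repairable by the shape tool: the threat is not a single ``generator of larger rank supplying'' $\eta_2$, but a product of higher-rank and parity-preserving rank-$\tfrac n2$ factors realizing it, and excluding that is the whole difficulty. The paper's argument here is of a different kind: in a factorization $\alpha_1\cdots\alpha_p=\eta_2$ it looks at the first prefix lying in $U=\{\alpha\mid \dom\alpha=\{2,4,\ldots,n\},\ \im\alpha=\{1,3,\ldots,n-1\}\}$, uses that a set of $\tfrac n2$ pairwise incomparable elements of the crown must be exactly the evens or exactly the odds, and concludes via Lemma~\ref{leG10} that the factor effecting the entry into $U$ itself lies in $U$; dually for domain odd/image even. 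Second, two smaller inaccuracies: your claim that the unordered domain-size pairs of the $\gamma_{i,n}$ and $\gamma_{i,1}$ are pairwise distinct is false across the two families ($\gamma_{4,n}$ and $\gamma_{5,1}$ both give $\{3,n-5\}$); the separation comes from endpoint parities, which is what the paper counts. And your combined ``(domain, image, parity) shape'' dichotomy does not follow from Remark~\ref{re1} applied to $\lambda$ and $\lambda^{-1}$: the factor matching the domain shape and the factor matching the image shape need not be the same element, and the factorization $\lambda^{-1}=a_m^{-1}\cdots a_1^{-1}$ uses inverses that need not belong to the generating set.
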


\begin{proof}
Let $G^{\ast}$ be a generating set of $IC_{n}$. Since the permutation $\sigma_{1}\in IC_{n}$, the set $G^{\ast}$ contains a permutation $\alpha$. If $\alpha$ is orientation-preserving then the orientation-reversing permutation $\sigma_{2}$ belongs not to $\left\langle \alpha \right\rangle$ since the set of orientation-preserving transformations is closed. If $\alpha$ is not orientation-preserving then it is easy to verify that $\alpha$ has to be orientation-reversing and $\alpha^{2}=\id$.
Hence, $\sigma_{1}\notin \left\langle \alpha \right\rangle =\{\alpha,\id\}$. Therefore, $G^{\ast}$ contains at least two transformations with rank $n$.

Since $\varepsilon_{1}\in IC_{n}$ and the transformations of rank $n$ (permutations) forms a subgroup of $IC_{n}$, the set $G^{\ast}$ contains an $\alpha \in IC_{n}$ with rank $n-1$. Let $a\in [n]$ such that $[n] = \dom\alpha \cup \{a\}$. If $a$ is odd then $\varepsilon_{n}\notin \left\langle IC_{n}(n), \alpha \right\rangle$ and if $a$ is even
then $\varepsilon_{1}\notin \left\langle IC_{n}(n), \alpha \right\rangle$ because the transformations in $IC_{n}(n-1)$ preserve the parity (i.e. if $\beta \in IC_{n}(n-1)$ and $x\in \dom\beta$ then $x$ and $x\beta$ have the same parity). Therefore, $G^{\ast}$ contains at least two transformations with rank $n-1$.

Assume that there are less than $2\left\lfloor \frac{n}{4}\right\rfloor -2$ transformations with rank $n-2$ in $G^{\ast}$. It is easy to verify that
$\left\lfloor \frac{n}{4}\right\rfloor -1=\left\vert 2\mathbb{N}+1\cap \{3,4,\ldots ,\frac{n}{2}-1\}\right\vert$. Moreover, $\dom\alpha$, for some
$\alpha \in IC_{n}$ with rank $n-2$, cannot obtain two maximal intervals with a size in $2\mathbb{N}+1\cap \{3,4,\ldots ,\frac{n}{2}-1\}$. Hence,
there is an $a\in 2\mathbb{N}+1\cap \{3,4,\ldots ,\frac{n}{2}-1\}$ such that all $\alpha \in G^{\ast}$ with rank $n-2$ have no maximal interval of size $a$ in
$\dom\alpha$ with even endpoints or all $\alpha \in G^{\ast}$ with rank $n-2$ have no maximal interval of size $a$ in $\dom\alpha$ with odd endpoints.
Without loss of generality, we can assume that all $\alpha \in G^{\ast}$ with rank $n-2$ have no maximal interval of size $a$ in $\dom\alpha$ with odd endpoints.
Note that $\gamma_{n-(a+1),n}\in IC_{n}$ has a maximal interval $\{n-(a+1)+1,\ldots,n-1\}$ of size $a$ with odd end points. Hence, there are $\alpha_{1},\ldots,\alpha_{p}\in G^{\ast}$ such that $\alpha_{1}\cdots \alpha_{p}=\gamma_{n-(a+1),n}$. Assume that there is $i\in\{1,\ldots,p\}$ with $\rank\alpha_{i}=n-2$. Since $\dom\alpha_{i}$ has no maximal interval of size $a$ with odd end points, the set $\dom(\alpha_{1}\cdots \alpha_{p})=\dom\gamma_{n-(a+1),n}$ has no maximal interval of
size $a$ with odd end points by Remark \ref{re1}. This contradicts that $\{n-(a+1)+1,\ldots,n-1\}$ is a maximal interval of $\dom\gamma_{n-(a+1),n}$ with odd endpoints. This provides that $\alpha_{1},\ldots,\alpha_{p}\in IC_{n}(n-1)$. It is easy to verify that all transformations in $IC_{n}(n-1)$ are orientation-preserving or orientation-reversing. Since the
set of all orientation-preserving or orientation-reversing (partial) transformations is closed, we can conclude that $\gamma_{n-(a+1),n}$ is orientation-preserving or orientation-reversing, which gives a contradiction. Hence, there are at least $2\left\lfloor \frac{n}{4}\right\rfloor -2$ transformations with rank $n-2$ in $G^{\ast}$.

Assume that there are less than $2\left\lfloor \frac{n}{4}\right\rfloor$ transformations with rank $n-3$ in $G^{\ast}$. It is easy to verify that
$2\left\lfloor \frac{n}{4}\right\rfloor =\left\vert 2\mathbb{N}\cap \{\frac{n}{2},\frac{n}{2}+1,\ldots,n-4\}\right\vert$ and $\dom\alpha$, for any
$\alpha \in IC_{n}^{\overline{p}}$ with rank $n-3$, cannot obtain two maximal intervals with a size in $2\mathbb{N}\cap \{\frac{n}{2},\frac{n}{2}+1,\ldots,n-4\}$.
Thus, there is $r\in 2\mathbb{N}\cap \{\frac{n}{2},\frac{n}{2}+1,\ldots,n-4\}$ such that for all $\alpha \in G^{\ast}$ with rank $n-3$ and
$a\alpha \in 2\mathbb{N}+1\cap [n]$ for some $a\in 2\mathbb{N}\cap [n]$ or for all $\alpha \in G^{\ast}$ with rank $n-3$ and $a\alpha \in 2\mathbb{N}\cap [n]$ for some
$a\in 2\mathbb{N}+1\cap [n]$, the maximal intervals in $\dom\alpha$ have a size different from $r$. Without loss of generality, we can assume that for all $\alpha \in G^{\ast}$ with rank $n-3$ and $a\alpha \in 2\mathbb{N}+1\cap [n]$ for some $a\in 2\mathbb{N}\cap [n]$, the maximal intervals in $\dom\alpha$ have a size different from $r$. It is easy to verify that $2\delta_{\frac{n-r}{2}-1}^{o}=1$ and $\dom\delta_{\frac{n-r}{2}-1}^{o}$ has a maximal interval $\{4,5,\ldots,r+3\}$ of size $r$. Then there are
$\alpha_{1},\ldots,\alpha_{p}\in G^{\ast}$ such that $\alpha_{1}\cdots \alpha_{p}=\delta_{\frac{n-r}{2}-1}^{o}$ and there is $i\in \{1,\ldots,p\}$ such that
$a\alpha_{i}\in 2\mathbb{N}+1$ for some $a\in 2\mathbb{N}\cap [n]$. Then by Remark \ref{re1}, both $\dom\alpha_{i}$ and $\dom(\alpha_{1}\cdots \alpha_{p})$ have the same
maximal intervals, i.e. $\dom\delta_{\frac{n-r}{2}-1}^{o}=\dom(\alpha_{1}\cdots \alpha_{p})$ has no maximal interval of size $r$, a contradiction. Hence, there are at least $2\left\lfloor \frac{n}{4}\right\rfloor$ transformations with rank $n-3$ in $G^{\ast}$.

Let $U=\{\alpha \in IC_{n} \mid \dom\alpha =\{2,4,\ldots,n\}$ and $\im\alpha =\{1,3,\ldots,n-1\}\}$. Note that $U \subset IC_{n}$. In particular, we have $\eta_{2}\in U$. Then there are $\alpha_{1},\ldots,\alpha_{p}\in G^{\ast}$ and $\alpha_{0}=\id$ such that $\alpha_{0}\alpha_{1}\cdots \alpha_{p}=\eta_{2}$. Then there is $r\in \{0,\ldots,p-1\}$
such that $w_{1} = \alpha_{0}\cdots \alpha_{r}\notin U$ and $\alpha_{0}\cdots \alpha_{r}\alpha_{r+1}\in U$. We put $w_{2}=\alpha_{r+1}\in G^{\ast}$ and have
$\{2,4,\ldots,n\}=\dom(w_{1}w_{2})\subseteq \dom w_{1}$. Since the elements in $\{2,4,\ldots,n\}$ are pairwise not connected, the elements in $\{2,4,\ldots,n\}w_{1}$ are pairwise not connected. Hence $\{2,4,\ldots,n\}w_{1}=\{1,3,\ldots,n-1\}$ or $\{2,4,\ldots,n\}w_{1}=\{2,4,\ldots,n\}$. If $\{2,4,\ldots,n\}w_{1}=\{1,3,\ldots,n-1\}$ then we obtain $w_{1}\in U$ by Lemma \ref{leG10}. Since $w_{1}\notin U$, we have $\{2,4,\ldots,n\}w_{1}=\{2,4,\ldots,n\}$. This implies  $\{2,4,\ldots,n\}w_{2}=\{2,4,\ldots,n\}w_{1}w_{2}=\{1,3,\ldots,n-1\}$. Again by Lemma \ref{leG10}, we obtain $w_{2}\in U$. By the same argumentation, we can show that
there is $w\in G^{\ast}$ with $\dom w=\{1,3,\ldots,n-1\}$ and $\im w=\{2,4,\ldots,n\}$. Thus, $G^{\ast}$ contains at least two transformations of rank $\frac{n}{2}$.

Altogether, we have shown that
$$\rank IC_{n}\geq 2+2+2\left\lfloor \frac{n}{4}\right\rfloor-2+2\left\lfloor \frac{n}{4}\right\rfloor+2 =
4\left\lfloor\frac{n}{4}\right\rfloor+4 = 4\left(\left\lfloor\frac{n}{4}\right\rfloor +1\right).$$
\end{proof}

From Propositions \ref{prG4} and \ref{prG3}, we have
\begin{theorem}\label{thG2}
$\rank IC_{n}= 4(\left\lfloor \frac{n}{4}\right\rfloor +1)$.
\end{theorem}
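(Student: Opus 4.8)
The plan is to pin down $\rank IC_n$ exactly by squeezing it between two matching bounds: an upper bound coming from an explicit economical generating set, and a lower bound valid for \emph{every} generating set. The upper bound is already in hand. By Theorem \ref{thG1} the set $\G = \mathcal{A}\cup\mathcal{B}$ generates $IC_n$, and Proposition \ref{prG4} records that $|\G| = 4(\lfloor n/4\rfloor + 1)$; hence $\rank IC_n \leq 4(\lfloor n/4\rfloor + 1)$. It then remains only to invoke Proposition \ref{prG3} for the reverse inequality and to observe that the two bounds coincide, which finishes the theorem. So the theorem itself is the immediate juxtaposition of the two propositions; the substance lies in how the lower bound is obtained.

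The heart of the argument, and the part I expect to be the main obstacle, is the lower bound of Proposition \ref{prG3}. The strategy is to fix an arbitrary generating set $G^{\ast}$ and to force it to contain prescribed numbers of elements at each of the rank levels $n$, $n-1$, $n-2$, $n-3$, and $\frac{n}{2}$. The unifying idea is to isolate, at each level, a structural invariant that is stable under composition yet cannot be manufactured from elements of strictly higher rank. At the two top levels this invariant is orientation together with parity preservation: a single permutation of order $\leq 2$ cannot produce both $\sigma_1$ and $\sigma_2$, forcing at least two generators of rank $n$, and the parity-preserving nature of rank $n-1$ maps (contrasted with $\varepsilon_1$ versus $\varepsilon_n$) forces at least two generators of rank $n-1$.

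For ranks $n-2$ and $n-3$ the relevant invariant becomes the multiset of sizes and endpoint-parities of the maximal intervals of the domain, whose stability under products is exactly the content of Remark \ref{re1}. Counting the admissible sizes in $2\N+1 \cap \{3,\ldots,\frac{n}{2}-1\}$ and in $2\N \cap \{\frac{n}{2},\ldots,n-4\}$ then yields the required $2\lfloor n/4\rfloor - 2$ generators of rank $n-2$ and $2\lfloor n/4\rfloor$ of rank $n-3$; in each case one selects a witness such as $\gamma_{n-(a+1),n}$ or $\delta_{\frac{n-r}{2}-1}^{o}$ that cannot be factored over $G^{\ast}$ unless one factor already carries the missing invariant. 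Finally, the rank-$\frac{n}{2}$ contribution is extracted by studying the set $U$ of maps carrying $\{2,4,\ldots,n\}$ onto $\{1,3,\ldots,n-1\}$ and using Lemma \ref{leG10} to argue that the first partial product landing in $U$ must itself lie in $U$, forcing two such generators. Summing the five contributions gives $2 + 2 + (2\lfloor n/4\rfloor - 2) + 2\lfloor n/4\rfloor + 2 = 4(\lfloor n/4\rfloor + 1)$, which closes the gap against the upper bound and establishes the claimed value of $\rank IC_n$.
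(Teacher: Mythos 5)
Your proof is correct and follows exactly the paper's route: the theorem is obtained by combining the upper bound of Proposition \ref{prG4} (via the generating set $\G$ of Theorem \ref{thG1}) with the lower bound of Proposition \ref{prG3}, and your sketch of the lower-bound argument (orientation/parity invariants at ranks $n$ and $n-1$, the maximal-interval invariant of Remark \ref{re1} at ranks $n-2$ and $n-3$, and the set $U$ with Lemma \ref{leG10} at rank $\frac{n}{2}$) matches the paper's proof of that proposition.
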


\end{document}